\newtheorem{theorem}{Theorem}[section]
\newtheorem{lemma}[theorem]{Lemma}
\newtheorem{proposition}[theorem]{Proposition}
\newtheorem{problem}[theorem]{Problem}
\theoremstyle{definition}
\newtheorem{example}[theorem]{Example}
\newtheorem{claim}[theorem]{Claim}
\newtheorem{conjecture}[theorem]{Conjecture}
\theoremstyle{remark}
\newtheorem{remark}[theorem]{Remark}
\numberwithin{equation}{section}
\begin{document}

\title{Waist and trunk of knots}

\author{Makoto Ozawa}
\address{Department of Natural Sciences, Faculty of Arts and Sciences, Komazawa University, 1-23-1 Komazawa, Setagaya-ku, Tokyo, 154-8525, Japan}
\email{w3c@komazawa-u.ac.jp}

\subjclass{Primary 57M25; Secondary 57Q35}



\keywords{knots, closed incompressible surface, meridionally small, bridge number, thin position, width, waist, trunk, supertrunk, hull number, 3-width}

\begin{abstract}
We introduce two numerical invariants, the {\em waist} and the {\em trunk} of knots.
The {\em waist} of a closed incompressible surface in the complement of a knot is defined as the minimal intersection number of all compressing disks for the surface in the 3-sphere and the knot.
Then the {\em waist} of a knot is defined as the maximal waist of all closed incompressible surfaces in the complement of the knot.
On the other hand, the {\em trunk} of a knot is defined as the minimal number of the intersection of the most thick level 2-sphere and the knot over all Morse positions of the knot.
In this paper, we obtain an inequality between the waist and the trunk of knots and show that the inequality is best possible.
We also define the {\em supertrunk} of a knot and relate it to the hull number and the 3-width. 
\end{abstract}

\maketitle

\section{Introduction}

\subsection{Waist of knots}
Here, we introduce a new numerical invariant, the {\em waist}, of knots which is obtained from all closed incompressible surfaces in the knot complements.
The waist is a generalization of the wrapping number of satellite knots and therefore it is considered to be a quantity to measure how the knot is tied up to a bunch.
A generalization of the winding number of satellite knots has been done in \cite{MO9}.

Let $K$ be a non-trivial knot in the 3-sphere $S^3$.
Then a torus $\partial N(K)$ is incompressible in $S^3-K$.
Therefore $S^3-K$ contains at least one closed incompressible surface.
Let $F$ be one of closed incompressible surfaces in $S^3-K$.
Since any closed surface in $S^3$ is compressible, there exists a compressing disk $D$ for $F$ in $S^3$.
We may assume that $D$ intersects $K$ transversely.
We call the minimum number of the intersection number $|D\cap K|$ of $D$ and $K$ over all compressing disks $D$ for $F$ in $S^3$ the {\em waist} of $F$, which is denoted by $waist(F)$.
The {\em waist} of $K$ is defined as the maximum number of $waist(F)$ over all closed incompressible surfaces $F$ in $S^3-K$, and we denote it by $waist(K)$.
Namely,
\[
waist(K)=\max_{F} \min_{D} |D\cap K|.
\]
We define that $waist(K)=0$ for the trivial knot $K$.

\begin{example}\label{cable}
Let $C(K;p,q)$ be the $(p,q)$-cable knot of the trefoil knot $K$.
Then, $waist(C(K;p,q))= p$.
In general, $waist(C(K;p,q))\ge p\times waist(K)$ for any knot $K$.
\end{example}

By Example \ref{cable}, the waist is considered to be a quantity to measure how the knot is tied up to a bunch.

\begin{problem}
In general, for a satellite knot $K'$ with a companion knot $K$ and a wrapping number $wrap(K')$ with respect to $K$, does the following equality hold?
\[
waist(K')=wrap(K')waist(K)
\]
\end{problem}

It holds that $waist(K')\ge wrap(K')waist(K)$.
Hence, $waist(K_1 \# K_2)=\max \{waist(K_1),waist(K_2)\}$.

\begin{example}
It follows from \cite{HT} that $waist(K)=1$ for any $2$-bridge knot $K$ since $S^3-K$ contains only peripheral torus as a closed incompressible surface.
On the other hand, there exists a 3-bridge knot $K$ with $waist(K)=2$.
See \cite{MO2} for the characterization of genus two closed incompressible and meridionally incompressible surfaces in the 3-bridge knot complements.

\end{example}

It is known that there are many knot classes whose knot has the waist one;

\begin{itemize}
\item 2-bridge knots (\cite{HT})
\item torus knots (\cite{J})
\item twisted torus knots with twists on 2-strands (\cite{Mor})
\item small knots (include all knot classes above.)
\item alternating knots (\cite{M})
\item almost alternating knots (\cite{A2})
\item toroidally alternating knots (\cite{A})
\item 3-braid knots (\cite{LP})
\item Montesinos knots (\cite{O})
\item algebraically alternating knots (include both of algebraic knots and alternating knots) (\cite{MO3})
\end{itemize}

The waist one knots have the next particular property.
We say that a surface properly embedded in a knot exterior is {\em free} if it cuts the knot exterior into handlebodies.

\begin{theorem}\label{free}
Let $K$ be a knot with $waist(K)=1$.
Then, any incompressible surface properly embedded in the exterior of $K$ with boundary of finite slope is free.
\end{theorem}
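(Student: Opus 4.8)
The plan is to argue by contradiction. Suppose $waist(K)=1$ and that $S\subset E(K):=S^3-\mathrm{int}\,N(K)$ is an incompressible surface with $\partial S$ of finite slope which is not free. We may assume $K$ is nontrivial, so that $E(K)$ is irreducible and $\partial N(K)$ is incompressible in $E(K)$ (the trivial case is immediate). Since $S$ is not free, some component $M$ of $E(K)\,|\,S$ is not a handlebody. Because $\partial S$ has finite, hence non-meridional, slope, the frontier of $M$ in $\partial N(K)$ is a union of annuli with cores parallel to $\partial S$, so $\partial M$ is a closed orientable surface; we view it as lying in $S^3-K$, where it separates $S^3$ into $M$ and a region $C\supset K$. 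Also $M$ is irreducible, since $E(K)$ is and $S$ is incompressible.

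The local key is that \emph{$\partial M$ has no compressing disk in $S^3-K$ lying on the $C$-side}. If $D\subset C$ were one, then, being disjoint from $K$, it could be isotoped off $N(K)$ into the union of the complementary pieces of $S$ other than $M$, with $\partial D$ essential in $\partial M$. If $\partial D$ lies on a copy of $S$ it is essential there, so $D$ compresses $S$ in $E(K)$, a contradiction. If $\partial D$ lies on a frontier annulus it is isotopic in $\partial M$ to a component of $\partial S$, i.e.\ to a non-meridional curve on $\partial N(K)$, so $D$ becomes a compressing disk for $\partial N(K)$ in $E(K)$, contradicting nontriviality of $K$. The finiteness of the boundary slope is used precisely here, to guarantee that the frontier annuli have essential cores.

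Next I would arrange $\partial M$ to be incompressible in $M$ as well, by compressing it maximally along disks contained in $M$ (automatically disjoint from $K$); using that a handlebody is exactly a connected irreducible $3$-manifold with nonempty boundary and free fundamental group, together with the fact that subgroups of free groups are free, one of the resulting pieces $M^{*}$ is irreducible, not a ball, and not a handlebody, with $F:=\partial M^{*}$ incompressible in $M^{*}$. Granting that $F$ can also be made incompressible from the other side in $S^3-K$ — the technical heart, see below — $F$ becomes a closed incompressible surface in $S^3-K$ (discard any $2$-sphere components bounding balls off $K$), so $waist(K)=1$ yields a compressing disk $D$ for $F$ with $|D\cap K|=1$, necessarily on the side of $F$ containing $K$. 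Capping the arc $D\cap N(K)$ with a meridian disk turns $D$ into a disk $\widehat D\subset S^3$ with $\partial\widehat D$ essential on $\partial M$ and $|\widehat D\cap K|=1$; tracing $\partial\widehat D$ back to $S$ as in the second paragraph shows that $S$ is meridionally compressible. One then meridionally compresses and runs an induction whose base case is the meridionally incompressible surfaces — where the dichotomy above already forces freeness — and whose inductive step uses that undoing a meridional compression (re-tubing along a meridian, i.e.\ a $1$-handle attachment on the complementary pieces) preserves freeness.

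I expect the main obstacle to be exactly the step flagged above: to carry out the compressions of $\partial M$ inside $M$ while keeping a piece non-handlebody \emph{and} retaining enough of the interface with $S$ and $\partial N(K)$ to rerun the cut-and-paste dichotomy on the new complementary region — equivalently, to control how the waist-one disk furnished by $waist(K)=1$ meets $K$, the surface $S$, and the non-handlebody piece. A secondary difficulty is the bookkeeping in the final induction, since a meridional compression introduces boundary components of meridional slope and thus leaves the class of finite-slope surfaces; the induction must be set up in a suitable larger class of surfaces (allowing mixed boundary slopes) for which ``free'' still makes sense, with care taken that re-tubing recovers freeness.
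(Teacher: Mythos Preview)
Your overall plan---locate a closed incompressible surface inside the non-handlebody complementary piece and then exploit $waist(K)=1$---matches the paper's, and your second paragraph (no compressing disk for $\partial M$ on the $C$-side in $S^3-K$) is correct and is essentially half of what is needed. But the execution you propose for the remaining steps takes a much harder road than the paper, and the gaps you flag are genuine.

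The paper's argument is short. First, it reduces to the case where the original surface (your $S$, the paper's $F$) is also $\partial$-incompressible: an incompressible, $\partial$-compressible properly embedded surface in an irreducible manifold with incompressible torus boundary is a boundary-parallel annulus, hence free. Second, instead of constructing the closed incompressible surface by iterated compressions of $\partial M$ and then worrying about incompressibility from the outside, the paper cites \cite[Lemma~2.2]{MO9}: if a complementary region of an incompressible surface in $E(K)$ fails to be a handlebody, then $E(K)$ contains a closed incompressible surface disjoint from the original surface. This sidesteps your first flagged difficulty entirely---the closed surface is produced already incompressible on both sides and already disjoint from $S$.

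The decisive simplification is in the endgame. You propose to take the waist-one disk $D$ for the closed surface, trace its boundary back to $\partial M$ and thence to $S$, deduce that $S$ is meridionally compressible, and induct on meridional compressions. That induction is problematic exactly as you say (the compressed surface acquires meridional boundary and leaves the hypothesis), and the tracing step is not well-formed as written: $\partial D$ lies on your compressed surface $F=\partial M^{*}$, not on $\partial M$, and undoing the compressions inside $M$ does not produce a disk with boundary on $\partial M$. The paper avoids all of this. Removing a neighborhood of $K$ from the waist-one disk gives an annulus $A\subset E(K)$ with one boundary on the closed surface and the other a meridian on $\partial N(K)$. One then makes $A$ disjoint from the \emph{original} surface $S$ directly (the argument of \cite{MO8}): circles of $A\cap S$ are removed by incompressibility of $S$, and arcs---which necessarily have both endpoints on the meridian end of $A$, since the other end of $A$ lies on a closed surface disjoint from $S$---are removed by $\partial$-incompressibility of $S$. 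Once $A\cap S=\emptyset$, the meridian $\partial A\cap\partial N(K)$ is disjoint from $\partial S$ on $\partial N(K)$; but a meridian meets every curve of finite slope, so $\partial S$ would have to be meridional, the desired contradiction. No induction is needed.

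So the idea you are missing is: do not try to push the waist-one disk back through your compressions toward $S$; view it instead as an annulus to a meridian and isotope \emph{that} off $S$. This is where $\partial$-incompressibility (which you never invoke) and the finite-slope hypothesis do their work simultaneously, in one step.
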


By Theorem \ref{free}, the exterior of a waist one knot is composed of one or two handlebodies.
Therefore, the waist one knots have simple complements among all knots.

\begin{problem}
Does the converse of Theorem \ref{free} hold?
Namely, if any incompressible surface properly embedded in the exterior of $K$ with boundary of finite slope is free, then $waist(K)=1$?
\end{problem}


\subsection{Trunk of knots}
Let $h:S^3\to \Bbb{R}$ be a Morse function with two critical points.
Suppose that $h$ is a Morse function on $K$.
We define the {\em trunk} of $K$ as
\[
trunk(K)=\min_{h} \max_{t\in \Bbb{R}} |h^{-1}(t)\cap K|,
\]
where $h$ is over all Morse function with two critical points.
In general, we have $trunk(K)\le 2 bridge(K)$, where $bridge(K)$ denotes the bridge number of $K$.

The next problem seems to be essential on trunk of knots.
Definitions about thin position are given in Section \ref{proof}.

\begin{problem}
If $K$ is in thin position, then does a most thick level 2-sphere give $trunk(K)$?
\end{problem}

We answer this problem partially by Theorem \ref{m-small}.

A knot $K$ is called {\em meridionally small} if there exists no essential surface in the exterior $E(K)$ of $K$ with meridional boundary.
We remark that by \cite{CGLS}, small knots are meridionally small and the converse holds if $w(K)=1$.

\begin{theorem}\label{m-small}
If a knot $K$ is meridionally small, then $trunk(K)=2bridge(K)$.
\end{theorem}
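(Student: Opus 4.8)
The plan is to establish the inequality $trunk(K)\ge 2\,bridge(K)$; combined with the general inequality $trunk(K)\le 2\,bridge(K)$ already recorded in the introduction (in a minimal bridge position the level $2$-sphere lying above all minima and below all maxima meets $K$ in exactly $2\,bridge(K)$ points, and no level $2$-sphere meets $K$ in more), this yields the stated equality. For the nontrivial direction I would prove that, when $K$ is meridionally small, the trunk is realized on a bridge position. Let $\mathcal{M}$ be the set of Morse positions $h$ of $K$ (Morse functions with two critical points, in Morse position with respect to $K$) with $\max_{t}|h^{-1}(t)\cap K|=trunk(K)$; this set is nonempty, and among its members I would fix one, $h$, of minimal width, where the width is the sum of $|h^{-1}(t)\cap K|$ over one regular value from each interval between consecutive critical values of $h|_K$. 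The main claim is that $h$ is a bridge position, that is, every maximum of $h|_K$ lies above every minimum. Granting the claim, if $h$ has $n$ maxima then its most thick level $2$-sphere meets $K$ in exactly $2n$ points, so $trunk(K)=2n$; since $h$ presents $K$ with $n$ bridges we get $n\ge bridge(K)$, while $trunk(K)\le 2\,bridge(K)$ gives $n\le bridge(K)$, so $n=bridge(K)$ and $trunk(K)=2\,bridge(K)$.

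To prove the claim I would run Thompson's thin-position argument inside the class $\mathcal{M}$. If $h$ is not a bridge position, then there are two consecutive critical values of $h|_K$ with a maximum below a minimum, and any level $2$-sphere $S$ strictly between them is a thin level: the surface $P=S\cap E(K)$ is properly embedded in the exterior $E(K)$ with meridional boundary. If $P$ is incompressible, $\partial$-incompressible, and not $\partial$-parallel, then $P$ is an essential surface with meridional boundary, contradicting the hypothesis that $K$ is meridionally small. Otherwise $P$ is compressible, $\partial$-compressible, or $\partial$-parallel, and in each case one can perform a thinning move: compress (or $\partial$-compress) $P$ on one side of $S$ and isotope the affected part of $K$ past the adjacent critical point, or, when $P$ is $\partial$-parallel, isotope $K$ across the product region, cancelling a maximum--minimum pair. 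Such a move yields a new Morse position $h'$ of $K$ with strictly smaller width. Since $trunk(K)$ is, by definition, a lower bound for $\max_{t}|(h')^{-1}(t)\cap K|$, a contradiction with the minimality of $h$ will follow once we know that the move does not increase $\max_{t}|h^{-1}(t)\cap K|$: then $h'\in\mathcal{M}$ has smaller width than $h$, which is impossible. Hence $h$ is a bridge position and the theorem follows.

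The main obstacle is exactly this last point --- guaranteeing that each thinning move can be carried out without ever creating a level $2$-sphere that meets $K$ in more than $trunk(K)$ points. I would handle it by choosing the compressing or $\partial$-compressing disk for $P$ on the side of $S$ that carries the nearer of the two consecutive critical values, so that the supporting isotopy of $K$ is confined to the slab of values between those two critical values together with a thin collar: within that slab the move only removes or lowers strands of $K$, and outside it $K$ is untouched, so no level $2$-sphere gains intersections with $K$. (The $\partial$-parallel case is the easiest, since the move there simply deletes a maximum and a minimum.) Verifying this localization carefully --- in effect, checking that Thompson's width-reducing moves respect not only the width but also the maximal level count --- is where the real work lies.
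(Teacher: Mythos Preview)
Your overall strategy is the paper's: among trunk-realizing Morse positions take one of minimal width, and show it must be a bridge position because any thin level sphere is either essential (contradicting meridional smallness) or permits a width-reducing isotopy that does not raise $\max_t|h^{-1}(t)\cap K|$. The paper isolates this last step as Lemma~\ref{isotopy}, proved by appealing to Wu's ``horizontal and vertical'' isotopy from \cite{W}, and applies it to the \emph{thinnest} thin sphere rather than an arbitrary one; it also treats $|P\cap K|=2$ separately (meridionally small knots are prime, so one side is trivial and can be cancelled).

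The gap is in your final paragraph. A compressing disk $D$ for $P=S-K$ has $\partial D\subset P$ and $\mathrm{int}\,D\cap K=\emptyset$; it does not touch $K$, so it does not directly produce an isotopy of $K$, and there is no reason to expect such a disk to exist on a prescribed side of $S$, let alone inside the slab between the two neighbouring critical values. The width-reducing move comes instead from upper/lower disks for a nearby \emph{thick} sphere, extracted from an outermost-arc analysis of $D$ against that thick sphere; Wu's point is that, starting from the thinnest thin sphere, this can be organized as a sequence of horizontal and vertical isotopies which never increases the intersection number with any level sphere. So your diagnosis of where the work lies is right, but the proposed mechanism (``choose the compressing disk on the side carrying the nearer critical value and confine the isotopy to that slab'') is not how the move is actually produced. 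Replace that paragraph by invoking Wu's argument on the thinnest thin sphere, as the paper does, and the proof goes through.
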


It holds that $trunk(K_1\# K_2)\le \max \{ trunk(K_1), trunk(K_2)\}$

\begin{problem}\label{sum}
Does the following equality hold?
\[
trunk(K_1\# K_2)= \max \{ trunk(K_1), trunk(K_2)\}
\]
\end{problem}

We answer Problem \ref{sum} partially.

\begin{theorem}\label{m-small2}
If $K_1$ and $K_2$ are meridionally small knots, then
\[
trunk(K_1\# K_2)= \max \{ trunk(K_1), trunk(K_2)\}
\]
\end{theorem}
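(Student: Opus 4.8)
The plan is to establish the inequality $trunk(K_1\# K_2)\ge\max\{trunk(K_1),trunk(K_2)\}$; combined with the reverse inequality noted just before the theorem this gives the equality. Write $K=K_1\# K_2$ and $b_i=bridge(K_i)$. If one of $K_1,K_2$ is trivial then $K$ equals the other summand and the statement is immediate, so assume both summands are non-trivial; then $b_i\ge 2$, and by Theorem \ref{m-small} we have $trunk(K_i)=2b_i\ge 4$. It suffices to prove that \emph{every} Morse position of $K$ has a level $2$-sphere meeting $K$ in at least $\max\{2b_1,2b_2\}$ points.

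The first step is a reduction to thin position. Starting from an arbitrary Morse position of $K$, one reaches a thin position by a sequence of width-decreasing moves: cancellations of maximum--minimum pairs of $h|_K$, and thinnings of thick level spheres. A thinning move replaces a thick level sphere meeting $K$ in $c$ points by thick level spheres meeting $K$ in fewer than $c$ points, leaving all other levels unchanged, while a cancellation only decreases intersection numbers; in particular no such move increases $\max_{t}|h^{-1}(t)\cap K|$. Hence it is enough to prove the asserted bound when $h$ is a thin position.

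The second step locates a decomposing sphere inside the thin position. By Schubert's additivity of bridge number $bridge(K)=b_1+b_2-1$, and the standard stacked presentation of a connected sum shows $w(K)\le w(K_1)+w(K_2)-2\le 2b_1^2+2b_2^2-2<2(b_1+b_2-1)^2=2\,bridge(K)^2$, the width of a $bridge(K)$-bridge position of $K$; here the strict inequality is $4(b_1-1)(b_2-1)>0$. Thus $h$ is not a bridge position, so it contains a thin level $2$-sphere $P$, and by the Scharlemann--Thompson analysis of thin position $P$ is incompressible and meridionally incompressible in $E(K)$. The hypothesis now enters decisively: since $K_1$ and $K_2$ are meridionally small they are prime (a composite knot carries an essential meridional annulus in its exterior), and an incompressible, meridionally incompressible planar surface in the exterior of a connected sum of two prime knots must be isotopic to the decomposing sphere. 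Hence $|P\cap K|=2$ and $P$ realises the decomposition $K=K_1\# K_2$. Verifying this last claim---by putting $P$ in minimal position with respect to the decomposing annulus and using meridional smallness of $K_1$ and $K_2$ to rule out all other configurations---is the step I expect to be the main obstacle.

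The third step is to cut along $P$ and cap off. Write $P=h^{-1}(t_0)$ and let $B^-$ and $B^+$ be the two balls it bounds, so that $B^-\cap K$ is an arc $\tau^-$ with $(B^-,\tau^-)$ the ball--arc pair of $K_1$, and $(B^+,\tau^+)$ that of $K_2$. Capping $\tau^-$ with a trivial arc placed just above $P$ and having a single maximum of $h$ produces a Morse position of $K_1$ whose level $2$-spheres meet it exactly as $h^{-1}(t)$ meets $K$ for $t\le t_0$, and in at most $2$ points for $t>t_0$. Hence $trunk(K_1)\le\max\{\,2,\ \max_{t\le t_0}|h^{-1}(t)\cap K|\,\}$, and since $trunk(K_1)=2b_1>2$ this forces $\max_{t\le t_0}|h^{-1}(t)\cap K|\ge 2b_1$. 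Capping $\tau^+$ symmetrically with a single-minimum arc below $P$ gives $\max_{t\ge t_0}|h^{-1}(t)\cap K|\ge 2b_2$. Therefore $\max_{t}|h^{-1}(t)\cap K|\ge\max\{2b_1,2b_2\}=\max\{trunk(K_1),trunk(K_2)\}$ for this thin position, and by the first step this lower bound holds for every Morse position of $K$. This proves $trunk(K_1\# K_2)\ge\max\{trunk(K_1),trunk(K_2)\}$, completing the argument.
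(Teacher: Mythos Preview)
Your overall strategy---reduce to a width-minimal configuration, locate an incompressible thin sphere, and cut along it---matches the paper's, but the paper sidesteps precisely the step you flag as the ``main obstacle.'' Rather than proving that the incompressible thin sphere $P$ must be the decomposing sphere, the paper simply splits on $|P\cap K|$: if $|P\cap K|=2$ then $P$ decomposes $K$ and your cutting argument finishes the proof; if $|P\cap K|\ge 4$ then the incompressible meridional planar surface $P\cap E(K)$, when intersected with the decomposing sphere, yields an essential meridional surface in one of the $E(K_i)$, contradicting meridional smallness directly. No classification of planar surfaces in $E(K_1\# K_2)$ is needed, and no appeal to meridional incompressibility of $P$ is required---which is fortunate, since Wu's theorem gives only incompressibility of the thinnest thin sphere, not meridional incompressibility, so that part of your second step is an overclaim.

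There is also a structural difference worth noting. You pass to genuine thin position and then use the width inequality $w(K)\le 2b_1^2+2b_2^2-2<2(b_1+b_2-1)^2$ to rule out bridge position a priori. The paper instead works with a position that realises $trunk(K)$ and has minimal width among such positions (using Lemma~\ref{isotopy} as the replacement for Wu's theorem), and disposes of the bridge case by observing that $2\,bridge(K_1\#K_2)=trunk(K_1)+trunk(K_2)-2>\max\{trunk(K_1),trunk(K_2)\}$ would contradict the already established inequality $trunk(K_1\#K_2)\le\max\{trunk(K_1),trunk(K_2)\}$. Your width computation is correct and is a pleasant alternative, but the paper's route is shorter.
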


\subsection{Relation between waist and trunk}
In this paper, we obtain an inequality between the waist and the trunk of knots.
\begin{theorem}\label{main}
For a knot $K$,
\[
\displaystyle waist(K)\le \frac{trunk(K)}{3}
\]
\end{theorem}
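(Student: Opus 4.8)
The plan is to work from a thin position of $K$ together with a closed incompressible surface $F$ realizing $\mathrm{waist}(K)$, and to show that the most thick level $2$-sphere must meet $K$ in at least $3\,\mathrm{waist}(F)$ points. First I would take $K$ in thin position with respect to a Morse function $h$ with two critical points, and let $P$ be a most thick level $2$-sphere, so that $\mathrm{width}$-type arguments are available; by definition $|P\cap K|\le \mathrm{trunk}(K)$ once we arrange $P$ to be \emph{the} most thick sphere. Now let $F$ be a closed incompressible surface in $S^3-K$ with $\mathrm{waist}(F)=\mathrm{waist}(K)=:w$. The standard move is to isotope $F$ so that it meets the Morse function $h$ nicely (finitely many Morse critical points, no tangencies with the level spheres except the minimal number), and then to look at how $F$ sits relative to the thin and thick levels. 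Thin-position technology (in the spirit of Gabai, and Casson--Gordon, as developed for incompressible surfaces) tells us that $F$ can be isotoped so that, at every thin level, $F$ meets the thin $2$-sphere only in curves that are essential on $F$ — otherwise one could do a thinning isotopy, contradicting thinness — and similarly controlling the thick levels.

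The key step is a counting/disk-surgery argument at the most thick level $2$-sphere $P$. Consider $P\cap F$: this is a collection of simple closed curves on $F$. If some curve of $P\cap F$ bounds a disk on $F$, an innermost such disk $D\subset F$ is a disk whose interior is disjoint from $P$; pushing $D$ slightly to one side of $P$ makes $D$ into a disk meeting $K$ in at most (roughly) half of $|P\cap K|$ points — more precisely, $D$ together with a subdisk of $P$ bounds a ball on one side, and $\partial D$ cuts $P$ into two disks $P_1,P_2$ with $|P_1\cap K|+|P_2\cap K| \le |P\cap K|$ (the points of $P\cap K$ partition), so $\min\{|P_1\cap K|,|P_2\cap K|\}\le |P\cap K|/2$. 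But $D$ is a compressing disk for $F$ in $S^3$ only if $\partial D$ is essential on $F$; if $\partial D$ is inessential on $F$ we instead isotope $F$ across $D$ to remove the intersection curve, reducing $|P\cap F|$. So after finitely many such reductions we may assume every curve of $P\cap F$ is essential on $F$. At that point, if $P\cap F\neq\emptyset$, each such curve bounds a disk on $P$ (since $P$ is a sphere), and an innermost one on $P$ is a compressing disk for $F$ in $S^3$; call it $E$. Then $|E\cap K|\ge \mathrm{waist}(F)=w$. The innermost disk $E$ lies in one of the two balls that $P$ bounds, and $E$ is a subdisk of... no: $E$ is a subdisk of $P$, so $|E\cap K|\le |P\cap K|$, which only gives $w\le |P\cap K|$, not the factor $3$. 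So the real work is to do better than this by using \emph{three} disks rather than one.

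The mechanism to get the factor $3$ is, I expect, the main obstacle, and it is where the structure of thin position beyond a single thick sphere comes in. The idea: rather than using only $P$, use $P$ and the two adjacent \emph{thin} spheres (or the two thick spheres on either side of the adjacent thin sphere), and build a compressing disk for $F$ that is assembled from pieces in three regions, so that its intersection with $K$ is at most a third of $|P\cap K|$; since any compressing disk for $F$ meets $K$ in at least $w$ points, this forces $|P\cap K|\ge 3w$. Concretely, one shows that if $|P\cap K| < 3w$ then the region between $P$ and an adjacent thin level, intersected with the ball bounded by an innermost curve of $F\cap P$, yields a disk (or a sequence of disk surgeries) producing a compressing disk for $F$ disjoint from $K$ in all but at most $\lfloor |P\cap K|/3\rfloor < w$ points — a contradiction. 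Making this precise requires the careful thin-position casework: tracking which intersection curves of $F$ with the thin and thick spheres are essential, using that at a thin level the curves of $F$ are essential on $F$ (by thinness), and performing an Euler-characteristic or outermost-arc count on $F\cap(\text{thin sphere})$ to locate the good compressing disk. I would also need the edge cases where $F$ is entirely above or below some thick level (handled directly), and where $P\cap F=\emptyset$ (then $F$ lies in a ball disjoint from a thick sphere, and a compressing disk can be found in that ball meeting $K$ in at most $|P\cap K|/2$ points, or one invokes that $F$ being incompressible in $S^3-K$ but in a ball forces a meridional compression, again bounding things). The sharpness claim ("best possible") would then be verified by exhibiting a family of knots — presumably cables or similar, via Example \ref{cable} — for which $\mathrm{trunk}(K)=3\,\mathrm{waist}(K)$.
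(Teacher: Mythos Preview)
Your proposal has a genuine gap: you correctly identify that a single innermost disk on a level sphere only yields $w\le|P\cap K|$, not the factor $3$, but your proposed fix---assembling a compressing disk from pieces across adjacent thin levels of $K$---is vague and is not the mechanism that actually works. Nothing in your sketch explains why three (rather than two) disjoint subdisks of level spheres, each with essential boundary on $F$, must appear.

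The paper's proof does not use thin position of $K$ at all; it fixes any Morse position realizing $\mathrm{trunk}(K)$ and then does Morse theory on $F$, not on $K$. Each saddle point $p$ of $F$ has a pair-of-pants neighborhood $X_p$ with three boundary circles $C_1,C_2,C_3$, and the paper calls $p$ \emph{Type IV} if all three $C_i$ are essential in $F$. At such a saddle each $C_i$ bounds a disk $D_i$ in its level sphere, and $D_1\cup D_2\cup D_3\cup X_p$ is (isotopic to) the entire level $2$-sphere $h^{-1}(t_p)$. Hence
\[
|D_1\cap K|+|D_2\cap K|+|D_3\cap K|=|h^{-1}(t_p)\cap K|\le \mathrm{trunk}(K),
\]
so $\min_i|D_i\cap K|\le \mathrm{trunk}(K)/3$, and an innermost-essential-curve argument inside that $D_i$ produces a genuine compressing disk for $F$ meeting $K$ in at most $\mathrm{trunk}(K)/3$ points. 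The remaining work is to show that if $F$ has no Type IV saddle and no level sphere carries three mutually non-parallel essential curves of $F$, then $F$ is a torus that can be isotoped to a ``$1$-bridge'' position, which forces $F$ to be compressible in $S^3-K$---a contradiction. So the missing idea is: the three disks come for free from a single saddle of $F$, not from stitching together pieces at several thin/thick levels of $K$.
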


Theorem \ref{main} says that there exists a relation between a {\em nature of surfaces} in the knot complements (waist) and a {\em nature of positions} of knots (trunk).

\begin{remark}\label{best}
Theorem \ref{main} is best possible.
In fact, there exists a knot $K$ which satisfies $waist(K)=\lfloor 2bridge(K)/3\rfloor$ for any integer $bridge(K)$, where $\lfloor x\rfloor$ denotes the floor function.

Let $K$ be a 3-bridge knot and $F$ a closed incompressible surface of genus two in the left side of Figure \ref{ex1}.
Then, it holds that $bridge(K)=3$ and $waist(F)=2$.
Moreover, let $K^n$ be a $(n,*)$-cable knot of $K$, and then we have that $bridge(K^n)=3n$ and $waist(F)=2n$.
Hence, Theorem \ref{main} is best possible when $bridge(K)\equiv 0\ (\rm{mod}\ 3)$.
Also, since it holds in general that $waist(K_1\# K_2)=\max\{waist(K_1),waist(K_2)\}$, it holds that for any 2-bridge knot $K'$, $bridge(K^n\# K')=3n+1$ and $waist(F)=2n$.
Hence, Theorem \ref{main} is also best possible when $bridge(K)\equiv 1\ (\rm{mod}\ 3)$.

To show that Theorem \ref{main} is best possible in the case $bridge(K)\equiv 2\ (\rm{mod}\ 3)$, let $K$ be a 3-bridge knot and $F$ a closed incompressible surface of genus five in the right side of Figure \ref{ex1}.
Then, it holds that $bridge(K)=3$ and $waist(F)=2$.
Moreover, let $K^n$ be a $(n,*)$-cable knot of $K$, and then we have that $bridge(K^n)=3n$ and $waist(F)=2n$.
We construct a knot ${K^n}'$ from $K^n$ by making two bridges parallelism from the left among three bridges of $K$ and doing adequate twists so that ${K^n}'$ becomes the knot.
Then, it holds that $bridge(K)=3n+2$ and $waist(F)=2n+1$.
Hence, Theorem \ref{main} is also best possible when $bridge(K)\equiv 2\ (\rm{mod}\ 3)$.

\begin{figure}[htbp]
	\begin{center}
	\begin{tabular}{cc}
	\includegraphics[trim=0mm 0mm 0mm 0mm, width=.2\linewidth]{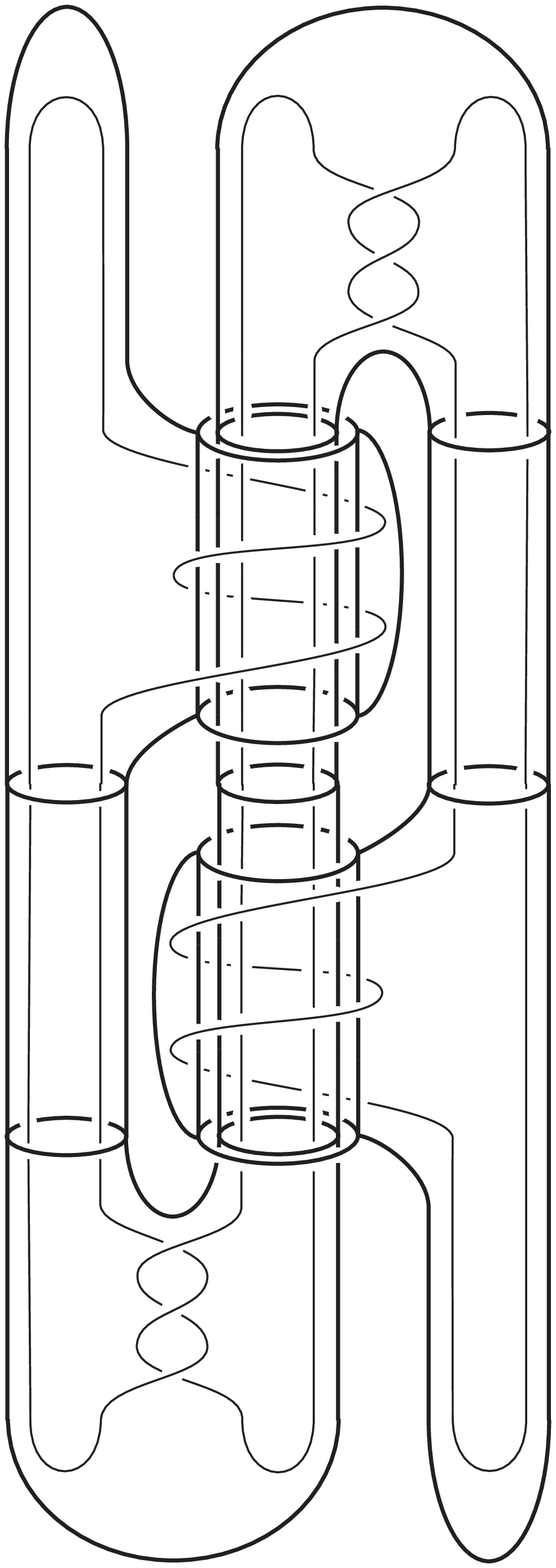}&
	\includegraphics[trim=0mm 0mm 0mm 0mm, width=.2\linewidth]{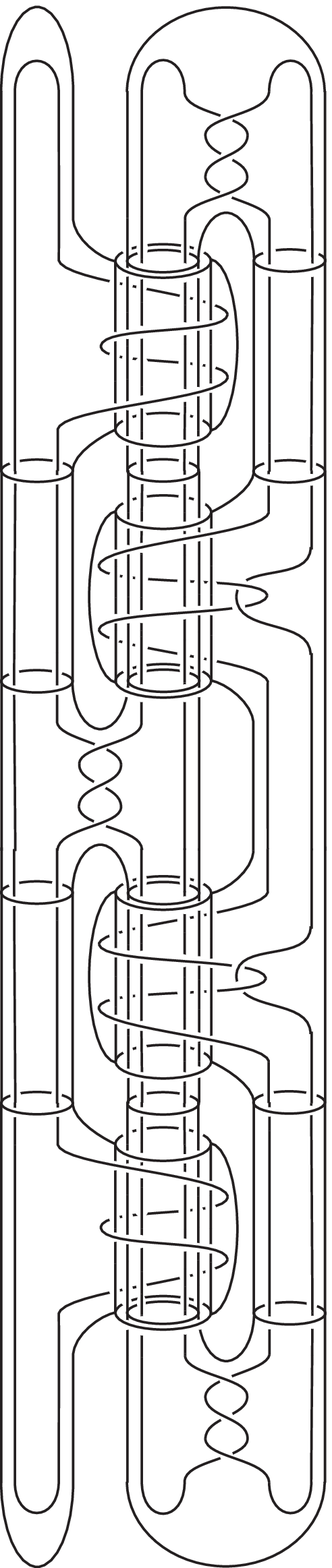}\\
	\end{tabular}
	\end{center}
	\caption{3-bridge knots and closed incompressible surfaces}
	\label{ex1}
\end{figure}
\end{remark}


\begin{remark}
An inequality in Theorem \ref{main} can have an arbitrarily gap against Remark \ref{best}.
For example, let $K$ be a $(p,q)$-torus knot.
Since $K$ is (meridionally) small, $waist(K)=1$ and by \cite{S} or \cite{S3} and Theorem \ref{m-small}, $trunk(K)=2bridge(K)=2\min\{p,q\}$.
\end{remark}

\subsection{Conjectures}
A knot diagram is {\em $m$-almost alternating} if it becomes to be alternating after changing some $m$-crossings, and a knot is said to be {\em $m$-almost alternating} if it has an $m$-almost alternating diagram but no $m-1$-almost alternating diagram.
The next inequality is expected from \cite{A2}.

\begin{conjecture}
Let $K$ be an $m$-almost alternating knot $(m\ge 1)$.
Then, $waist(K)\le m$.
\end{conjecture}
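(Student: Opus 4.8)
The plan is to argue by induction on $m$, generalizing the argument for almost alternating knots (the case $m=1$) from \cite{A2}. Fix an $m$-almost alternating diagram of $K$ on a projection sphere $S\subset S^3$ dividing $S^3$ into balls $B_+$ and $B_-$, and let $c_1,\dots,c_m$ be the dealternator crossings, so that changing all of them produces an alternating diagram. Let $F$ be an arbitrary closed incompressible surface in $S^3-K$; it suffices to exhibit a compressing disk $\Delta$ for $F$ in $S^3$ with $|\Delta\cap K|\le m$, since then $waist(F)\le m$ and hence $waist(K)\le m$. The base case $m=1$ is \cite{A2} (and, a fortiori, alternating knots have $waist=1$, cf.\ \cite{M}), so assume $m\ge 2$.

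First approach (iterated crossing change). Changing the single crossing $c_m$ yields a knot $K'$ carrying an $(m-1)$-almost alternating diagram, so by the inductive hypothesis $waist(K')\le m-1$. The crossing change is realized by $\pm 1$-surgery on an unknot $J$ bounding a disk $E$ with $|E\cap K|=2$, and $E$ may be taken disjoint from $F$ after isotopy, using incompressibility of $F$ together with irreducibility of $S^3$ away from $K$ to remove trivial circles and arcs of $F\cap E$. One then transports a compressing disk for the image of $F$ in $S^3-K'$ that realizes $waist$ across $E$: pushing a disk over $E$ and past the two strands of $K'$ it meets should cost at most one extra meridional intersection, giving $|\Delta\cap K|\le (m-1)+1=m$.

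Second approach (direct, Menasco-style). Put $K$ in bubble position with respect to $S$, make $F$ meet $S$ in circles and meet the crossing bubbles in standard saddles, and run the innermost-disk / outermost-arc reductions from Menasco's analysis of incompressible surfaces in alternating link complements. The reductions terminate exactly as in the alternating case except near the $m$ dealternator bubbles, where the over/under pattern is reversed; at such a bubble one performs a compression or isotopy of $F$ through the bubble, costing at most one intersection with $K$ per dealternator, for a total of at most $m$, producing the desired compressing disk $\Delta$.

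The main obstacle in either approach is isolating and bounding the local contribution of each of the $m$ ``bad'' crossings by $1$. In the crossing-change approach, incompressibility of $F$ is not preserved by the surgery: $F$ may become compressible in $S^3-K'$, so the inductive hypothesis a priori speaks about a surface obtained from $F$ by compressions, and one must relate that surface back to $F$; moreover one must ensure the meridional cost of crossing $E$ is exactly one rather than two. In the Menasco-style approach, the difficulty is the bookkeeping near dealternator bubbles — verifying that the standard reductions at the ordinary bubbles still go through in their presence, and that each dealternator can be ``spent'' for a single meridional intersection. This last point is precisely where a naive argument would only give $waist(K)\le 2m$, so the heart of the matter is the improvement from $2m$ to $m$.
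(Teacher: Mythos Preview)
The statement you are attempting to prove is labeled a \emph{Conjecture} in the paper, and the paper gives no proof of it; it is merely ``expected from \cite{A2}.'' There is therefore nothing to compare your argument against on the paper's side.

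Your proposal is not a proof either, and you say so yourself: both approaches end with an explicit list of unresolved obstacles. In the crossing-change approach, the step ``$E$ may be taken disjoint from $F$'' is already unjustified (a meridian disk of a crossing circle meets $K$ in two points, so $F\cap E$ can consist of circles essential on the punctured $E$, which incompressibility of $F$ in $S^3-K$ does not let you remove), and even granting it, the image of $F$ need not be incompressible in $S^3-K'$, so the inductive hypothesis does not apply to it. In the Menasco-style approach, the assertion that each dealternator ``costs at most one intersection with $K$'' is exactly the content of the conjecture and is not established; as you note, the naive count gives $2m$, and nothing in your sketch bridges the gap to $m$. What you have written is a reasonable outline of why one might believe the conjecture and where the difficulty lies, but it is not a proof, and the paper does not claim one.
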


Also the next inequality is expected from \cite{LP}.

\begin{conjecture}
For any knot $K$,
\[
\displaystyle waist(K)\le \frac{braid(K)}{2},
\]
where $braid(K)$ denotes the braid index of $K$.
\end{conjecture}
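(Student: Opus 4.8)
The plan is to follow the strategy behind the $3$-braid case of \cite{LP}: present $K$ as the closure of an $n$-braid with $n=braid(K)$, let $A$ be the braid axis (an unknot), and let $V=S^3-N(A)$ be the complementary solid torus, fibered over $S^1$ by meridian disks $\{D_\theta\}_{\theta\in S^1}$ with $|D_\theta\cap K|=n$ for every $\theta$. Choose a closed incompressible surface $F\subset S^3-K$ with $waist(F)=waist(K)$; since $F$ is closed in $S^3$ it is separating. The goal is to manufacture, out of pieces of the meridian disks, a compressing disk $D$ for $F$ in $S^3$ with $|D\cap K|\le n/2$.

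First I would isotope $F$ so as to minimize $|F\cap A|$ and make the projection $\theta|_F\colon F\to S^1$ Morse; then $F\cap D_\theta$ is, for generic $\theta$, a disjoint union of circles and (if $F\cap A\neq\emptyset$) properly embedded arcs in $D_\theta$. Using incompressibility of $F$ in $S^3-K$ together with irreducibility of $S^3-K$, I would remove every circle of $F\cap D_\theta$ that bounds a disk in $D_\theta-K$ and every arc that cuts off a disk of $D_\theta$ disjoint from $K$: such a disk is either a compressing disk for $F$ disjoint from $K$, which is impossible, or it can be used to isotope $F$ and simplify the configuration. After this cleanup, every component $c$ of $F\cap D_\theta$ is essential in $D_\theta-K$, so the subdisk of $D_\theta$ it cuts off contains a nonempty proper subset of the $n$ points of $D_\theta\cap K$. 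An innermost such component on $D_\theta$ cuts off a genuine compressing disk (after checking, again via incompressibility and irreducibility, that its boundary is essential on $F$) meeting $K$ in $j$ points with $1\le j\le n-1$.

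The crux — and this is exactly where the factor $\tfrac12$ must be extracted — is to show that among all the curves $F\cap D_\theta$, as $\theta$ ranges over $S^1$, one can always find an essential-on-$F$ component cutting off a subdisk that contains at most $\lfloor n/2\rfloor$ of the $n$ strands. Here I would exploit that $F$ is separating: a component $c$ essential on $F$ lies on the frontier between the two sides of $F$, and I would compare the subdisk of $D_\theta$ "inside" $c$ with its complement, or run a counting/Euler-characteristic argument over a full turn $\theta\in S^1$ that tracks how the partition of the $n$ strands induced by $F\cap D_\theta$ evolves through the Morse critical points, forcing a balanced splitting at some level. The extremal case — the $(p,q)$-cable of the trefoil (for suitable $q$), which has $waist=p$ and braid index $2p$, where the meridian disk of the companion solid torus meets $K$ in exactly $p=\tfrac12\,braid(K)$ strands — shows that no better bound is possible and suggests that the balanced splitting is realized by a band sum of essential curves of the fibration.

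The main obstacle I anticipate is twofold. First, surfaces with $F\cap A\neq\emptyset$ genuinely occur, and they make the arc analysis delicate: one must rule out, or else exploit, arcs of $F\cap D_\theta$ with both endpoints on the same meridian of $\partial N(A)$, and in that case the innermost disk need not be a compressing disk at all. Second, even granting the cleanup it is not clear that an essential curve cutting off few strands exists at \emph{every} level, so the argument probably has to be organized as a thin-position argument for $F$ relative to $\{D_\theta\}$ rather than a single-level argument, with the balanced disk appearing at a thick level. Combining with Theorem \ref{main} does not shortcut this: that route would require $trunk(K)\le\tfrac32\,braid(K)$, which already fails for torus knots ($trunk=2\,braid$), so a direct braid-theoretic argument seems unavoidable.
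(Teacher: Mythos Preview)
The statement you are attempting to prove is presented in the paper as a \emph{conjecture}, not a theorem: the paper offers no proof and only motivates the inequality by analogy with the $3$-braid result of Lozano--Przytycki \cite{LP}. There is therefore no proof in the paper to compare your attempt against, and what you have written is a plan of attack on an open problem rather than a proof.

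Your outline is a reasonable strategy, and you are candid about its incompleteness, but the gaps you flag are the entire content of the conjecture. Two points deserve emphasis. First, after your cleanup an innermost curve $c$ of $F\cap D_\theta$ that is essential in $D_\theta-K$ need not be essential on $F$: if $c$ bounds a disk $D'\subset F$, then $D'$ together with the subdisk $\Delta\subset D_\theta$ is a sphere meeting $K$ in $|\Delta\cap K|\ge 2$ points, and neither incompressibility of $F$ in $S^3-K$ nor irreducibility of $S^3-K$ lets you remove $c$ by an isotopy. You cannot simply ``check'' that $c$ is essential on $F$; you would need an additional hypothesis such as meridional incompressibility, which is not available in general. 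Second, and more seriously, the ``balanced splitting'' step is exactly where the factor $\tfrac12$ must come from, and nothing in your sketch forces it: the separating property of $F$ does not prevent every essential-on-$F$ curve in every $D_\theta$ from cutting off more than $n/2$ strands, and an Euler-characteristic or thin-position count over $\theta\in S^1$ would need a genuinely new idea, analogous to the Type~IV saddle that produces the $\tfrac13$ in the proof of Theorem~\ref{main} but adapted to a circular Morse function and yielding a two-way rather than three-way split of a level disk. Your own observation that the route via Theorem~\ref{main} is blocked (since $trunk(K)\le\tfrac32\,braid(K)$ fails already for torus knots) is correct and underscores that a direct argument is required; at present no such argument is known.
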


\subsection{Supertrunk and hull number, 3-width}
We can define the {\em supertrunk} of a knot $K$ in $\Bbb{R}^3$ as
\[
supertrunk(K)=\min_{K} \max_{P\in \mathcal{P}} |P\cap K|,
\]
where $\mathcal{P}$ denotes the set of all plane in $\Bbb{R}^3$ and the minimum is taken over all knots equivalent to $K$.
In general, we have $trunk(K)\le supertrunk(K)$ and $supertrunk(K)\le 2 superbridge(K)$, where $superbridge(K)$ denotes the superbridge index of $K$ (\cite{K}).

On the other hand, the {\em $n$-th hull} $h_n(K)$ of a fixed knot $K$ is the set of points $p\in\Bbb{R}^3$ such that every plane through $p$ cuts $K$ at least $2n$ times (\cite{CKKS}).
Then the {\em hull number} of a knot $K$ is defined as
\[
hull(K)=\min_{K} \max\{n|h_n(K)\ne \emptyset \},
\]
where the minimum is taken over all knots equivalent to $K$.
The next inequality is essentially contained in \cite[Proposition in Section 5]{CKKS}.

\begin{proposition}
For any knot $K$ in $\Bbb{R}^3$,
\[
\displaystyle hull(K)\le \frac{supertrunk(K)}{2},
\]
where $hull(K)$ denotes the hull number of $K$.
\end{proposition}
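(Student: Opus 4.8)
The plan is to produce a single embedding of $K$ whose largest non-empty hull already obeys the bound; since $hull(K)=\min_K\max\{n\mid h_n(K)\neq\emptyset\}$ is an infimum over all equivalent embeddings, one good embedding suffices. Accordingly, I would first choose, among all knots equivalent to $K$, an embedding $K_0\subset\Bbb{R}^3$ with $\max_{P\in\mathcal{P}}|P\cap K_0|=supertrunk(K)$. Such a minimizer exists because for each (polygonal) representative $\max_{P}|P\cap K|$ is a non-negative integer, so the infimum defining $supertrunk(K)$ is attained; alternatively, one only needs an embedding with $\max_{P}|P\cap K_0|\le supertrunk(K)$, which exists by the very definition of the minimum.

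Next I would bound every non-empty hull of $K_0$. Suppose $h_n(K_0)\neq\emptyset$ and fix a point $p\in h_n(K_0)$. By the definition of the $n$-th hull, every plane through $p$ cuts $K_0$ at least $2n$ times; choosing any one such plane $P$ (infinitely many planes pass through $p$) gives $2n\le|P\cap K_0|$. Since $P\in\mathcal{P}$, we also have $|P\cap K_0|\le\max_{Q\in\mathcal{P}}|Q\cap K_0|=supertrunk(K)$, whence $2n\le supertrunk(K)$, i.e.\ $n\le supertrunk(K)/2$. This shows $\max\{n\mid h_n(K_0)\neq\emptyset\}\le supertrunk(K)/2$, and therefore $hull(K)\le supertrunk(K)/2$.

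I do not expect a substantial obstacle. Once the supertrunk-optimal embedding is fixed, the inequality is a direct comparison of the two definitions, and it uses only that \emph{some} plane passes through the chosen point $p$, so no genericity or transversality argument is needed and the counts $|P\cap K_0|$ occurring in the two definitions are literally the same cardinality. The only mildly delicate point is the attainment of the minimum in the first step, which is harmless since that minimum ranges over non-negative integers and can in any case be sidestepped as indicated. This is, in essence, the argument behind \cite[Proposition in Section 5]{CKKS}.
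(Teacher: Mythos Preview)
Your argument is correct: once an embedding realizing $supertrunk(K)$ is fixed, any point in a non-empty $h_n$ forces $2n\le supertrunk(K)$ by picking a single plane through it, and then taking the minimum over embeddings gives the result. The paper does not supply its own proof of this proposition; it simply states that the inequality is essentially contained in \cite[Proposition in Section 5]{CKKS}, which you likewise cite, so there is nothing further to compare.
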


The trunk and supertrunk also give rough estimations on the width and 3-width.
See Section \ref{proof} for the definition of the width and \cite{HRT} for the 3-width.

\begin{proposition}
For a knot $K$ in $\Bbb{R}^3$,
\[
\displaystyle width(K)\ge \frac{trunk(K)^2}{2},
\]
\[
\text{3-}width(K)\ge supertrunk(K)(supertrunk(K)+1).
\]
\end{proposition}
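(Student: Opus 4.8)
The plan for the first inequality is to bound $width(h,K)$ from below for \emph{every} Morse function $h\colon S^3\to\Bbb{R}$ with two critical points and every Morse position of $K$ with respect to $h$, and then to minimize over $h$. Fix such an $h$, let $c_1<\cdots<c_m$ be the critical values of $h|_K$, choose regular values $r_i\in(c_i,c_{i+1})$ for $1\le i\le m-1$, and set $f_i=\abs{h^{-1}(r_i)\cap K}$, so that $width(h,K)=\sum_{i=1}^{m-1}f_i$. Each level $2$-sphere $h^{-1}(t)$ meets the closed curve $K$ in an even number of points, so every $f_i$ is even; moreover $f_1=f_{m-1}=2$ and $\abs{f_{i+1}-f_i}=2$, the count jumping by $+2$ at a minimum and by $-2$ at a maximum of $h|_K$. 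Put $2k=\max_i f_i=\max_t\abs{h^{-1}(t)\cap K}$, so that $2k\ge trunk(K)$ by the definition of the trunk.

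The key step I would carry out next is the elementary ``mountain range'' estimate $\sum_{i=1}^{m-1}f_i\ge 2k^2$. The slickest route is to count, for each odd integer $\ell$ with $1\le\ell\le 2k-1$, the number $n_\ell=\#\{i:f_i>\ell\}$ of regular levels lying above $\ell$. Since $f_i=2\cdot\#\{\ell\ \mathrm{odd}:\ell<f_i\}$, one has $\sum_i f_i=2\sum_{\ell\ \mathrm{odd}}n_\ell$. On the other hand the sequence $(f_i)$ begins and ends at $2$, reaches $2k$, and changes only by $\pm2$, so it must cross each odd value $\ell$ once on the way up and again on the way down, and between the first up-crossing and the last down-crossing it stays $\ge\ell+1$; counting the indices in between forces $n_\ell\ge 2k-\ell$. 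Summing over $\ell=1,3,\dots,2k-1$ gives $\sum_i f_i\ge 2\sum_{j=1}^{k}\bigl(2k-(2j-1)\bigr)=2k^2$. Therefore $width(h,K)=\sum_i f_i\ge 2k^2=\tfrac12(2k)^2\ge\tfrac12\,trunk(K)^2$, and minimizing over $h$ yields $width(K)\ge trunk(K)^2/2$. The profile $2,4,\dots,2k,\dots,4,2$ attains $\sum_i f_i=2k^2$, which is precisely where the slack in this ``rough'' estimate originates.

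For the second inequality the scheme is identical, with the Morse function on $S^3$ replaced by the plane sweepout of $\Bbb{R}^3$ used to define the $3$-width in \cite{HRT}, and with $trunk(K)$ replaced by $supertrunk(K)$: for the representative of $K$ realizing $s:=supertrunk(K)$, every plane meets $K$ in at most $s$ points while some plane meets it in exactly $s$ points, so the thickest level of any such sweepout carries at least $s$ intersection points. An intermediate value argument then forces the intersection-number profile of the sweepout to pass through every value $1,2,\dots,s$, and since the profile is pinned to $0$ at the ends of the sweep and is forced to attain its maximum value $s$ at two distinct points of the parameter space, its total is at least $2(0+1+\cdots+(s-1))+2s=s(s+1)$, giving $\text{3-}width(K)\ge supertrunk(K)(supertrunk(K)+1)$. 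The one genuine obstacle is the bookkeeping dictated by the precise definition in \cite{HRT}: one must check that the $3$-width sweepout necessarily passes through a plane meeting $K$ in at least $s$ points, that its profile is a ``mountain'' vanishing at the ends of the sweep, that this profile changes by a single unit rather than by $\pm2$ at each elementary event, and that its maximum $s$ is necessarily assumed twice rather than once. It is exactly the last two features — unit steps and a doubled peak, in place of the $\pm2$ steps and single peak available for ordinary $width$ — that upgrade the constant from $s^2/2$ to $s(s+1)$.
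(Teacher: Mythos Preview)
The paper states this proposition without proof, so there is nothing to compare your argument against directly; your write-up is effectively filling in what the paper left to the reader.

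For the first inequality your strategy is the standard and correct one, but one step is mis-argued. You claim that between the first up-crossing and the last down-crossing of an odd level $\ell$ the profile ``stays $\ge \ell+1$''. This is false: take the profile $2,4,2,4,6,4,2$ with $2k=6$ and $\ell=3$; the sequence dips to $2$ between its first up-crossing and last down-crossing of $3$. The bound $n_\ell\ge 2k-\ell$ is nonetheless correct, and the cleanest fix is to split at an index $j$ with $f_j=2k$: on each side the last index $i$ with $f_i\le\ell-1$ is followed by a run of at least $(2k-\ell+1)/2$ indices with $f_i\ge\ell+1$ ending at $j$, giving $n_\ell\ge 2\cdot\frac{2k-\ell+1}{2}-1=2k-\ell$. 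Equivalently, one can avoid the $n_\ell$ bookkeeping entirely by noting that each half-profile from $2$ to $2k$ has sum at least $2+4+\cdots+2k=k(k+1)$, whence $\sum_i f_i\ge 2k(k+1)-2k=2k^2$.

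For the second inequality there is a genuine gap beyond the bookkeeping you flag. You begin with ``the representative of $K$ realizing $s:=supertrunk(K)$'', but both $supertrunk$ and $3\text{-}width$ are minima over representatives, so a lower bound on $3\text{-}width(K)$ must hold for \emph{every} representative, not the one realizing $supertrunk$. The correct starting point is that for any representative $K'$ one has $\max_P|P\cap K'|\ge supertrunk(K)=s$; you then need the $3$-width family of planes to be rich enough to witness this, which is exactly the issue you defer to \cite{HRT}. Finally, your assertion that the maximum $s$ must be attained twice in the sweepout (the feature you say upgrades $s^2/2$ to $s(s+1)$) is not justified and does not follow from the argument as written; whether it holds depends entirely on the structure of the $3$-width sweepout in \cite{HRT}, so until that is pinned down the constant $s(s+1)$ remains unproved.
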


\section{Proofs}\label{proof}

\begin{proof} (of Theorem \ref{free})
Suppose that there exists an incompressible and $\partial$-incompressible surface $F$ of finite slope in the exterior $E(K)$ of a knot $K$ with $waist(K)=1$.
Then at least one of components obtained by cutting $E(K)$ by $F$ is not a handlebody and hence by \cite[Lemma 2.2]{MO9} there exists a closed incompressible surface $S$ in $E(K)-F$.
It follows from $waist(K)=1$ that there exists an annulus $A$ connecting $S$ and a meridian of $E(K)$.
By an argument similar to the proof of Theorem 1 in \cite{MO8} and the incompressibility and $\partial$-incompressibility of $F$, we may assume that $A\cap F=\emptyset$.
However, this means that the $\partial$-slope of $F$ is infinite (meridional).
\end{proof}

Let $h:S^3\to \Bbb{R}$ be the standard Morse function and $K$ be a knot in $S^3$ which is assumed so that the restriction of $h$ to $K$ is a Morse function.
Let $t_1,\ldots,t_n$ be critical values of $h|_K$ such that $t_i<t_{i+1}$, and choose regular values $r_1,\ldots,r_{n+1}$ of $h|_K$ so that $r_i<t_i<r_{i+1}$.
The {\em width} $w(K)$ of $K$ is defined as
\[
w(K)=\min_{K} \sum_{i=1}^{n+1}|h^{-1}(r_i)\cap K|,
\]
where the minimum is taken over all knots equivalent to $K$ (\cite{G}).
A knot $K$ is in {\em thin position} if it realizes $w(K)$.
We say that a level 2-sphere $S=h^{-1}(r_i)$ is {\em thick} if $|h^{-1}(r_{i-1})\cap K|<|h^{-1}(r_{i})\cap K|$ and $|h^{-1}(r_{i+1})\cap K|<|h^{-1}(r_{i})\cap K|$, and that a level 2-sphere $S=h^{-1}(r_i)$ is {\em thin} if $|h^{-1}(r_{i-1})\cap K|>|h^{-1}(r_{i})\cap K|$ and $|h^{-1}(r_{i+1})\cap K|>|h^{-1}(r_{i})\cap K|$.
Thompson (\cite{T}) showed that if $K$ is in thin position, then it is in a bridge position or not meridionally small, and Wu (\cite{W}) extended this result by showing that if $K$ is in thin position, then it is in a bridge position or a most thinnest thin level 2-sphere is incompressible in the complement of $K$.

We have the following lemma by an argument same as the proof of \cite[Theorem 1]{W}.

\begin{lemma}\label{isotopy}
Let $K$ be a knot in $S^3$ such that $h|_K$ is a Morse function and $P$ a thinnest level 2-sphere.
If $P-K$ is compressible in $S^3-K$, then there exists a (``horizontal'' and ``vertical'') isotopy $f_t$ of $S^3$ such that $w(K')<w(K)$ and $trunk(K')\le trunk(K)$, where $K'=f_1(K)$.
\end{lemma}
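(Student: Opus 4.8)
The plan is to mimic Wu's proof of \cite[Theorem 1]{W} for thin position, keeping careful track of how the width decreases and how the maximal thick level behaves under the isotopy. First I would set up the standard picture: since $P-K$ is compressible in $S^3-K$, fix a compressing disk $D$ with $\partial D\subset P$ and $D$ meeting $K$ minimally (so here $D\cap K=\emptyset$ is not forced, but we pick $D$ to be a genuine compressing disk, i.e.\ disjoint from $K$, which exists because $P-K$ is compressible as a \emph{surface in the knot complement}). The disk $D$ lies on one side of $P$, say the lower side $B_-=h^{-1}((-\infty,r])$. By a ``horizontal'' isotopy I would push $\partial D$ slightly and then use an innermost-disk/outermost-arc argument on the intersections of $D$ with the other level spheres just below $P$, exactly as in Wu's argument, to arrange that $D$ is in a nice position with respect to $h$ and that the region of $B_-$ cut off by $D$ can be used to guide an isotopy of $K$.

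Next I would perform the two isotopies. The ``vertical'' isotopy slides the portion of $K$ lying in the sub-ball bounded by $D$ (together with a subdisk of $P$) upward past the level $r$; because $D$ is disjoint from $K$, this is an honest isotopy of $S^3$ carrying $K$ to a new knot $K'$. The ``horizontal'' isotopy is the preliminary reshuffling of $D$ described above, supported in a collar of $P$, and hence does not change any level set count by more than a controlled amount. The point of the construction, as in Gordon--Litherland and Wu, is that pushing the strands across the compressing disk strictly reduces the width: the arcs of $K$ that previously achieved a local maximum just below $P$ and a local minimum just above get cancelled or rearranged so that $\sum_i |h^{-1}(r_i)\cap K'| < \sum_i |h^{-1}(r_i)\cap K|$, giving $w(K')<w(K)$. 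I would verify this width drop by the usual bookkeeping on the graphic of $h|_K$ before and after the push.

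The remaining, and I expect the most delicate, point is the inequality $trunk(K')\le trunk(K)$. Here I cannot simply invoke ``width decreases, hence everything decreases'': the maximal level set count could in principle go up at some level away from $P$. The key observation is that the isotopy is supported in the sub-ball determined by $D$, which lies below the \emph{thinnest} level $P$; above $P$ nothing changes, and below $P$ the effect of sliding strands across $D$ is to move intersection points from a higher level to a lower level without ever creating a level whose count exceeds the count at $P$ before the move (since $P$ is thinnest, every level below it already has count $\ge |P\cap K|$, and the push only redistributes among those levels while removing a max--min pair). Consequently $\max_t |h^{-1}(t)\cap K'|\le \max_t |h^{-1}(t)\cap K|$ for the \emph{given} Morse function $h$, and since $trunk(K')$ is the minimum over all Morse functions of this maximum, we get $trunk(K')\le \max_t|h^{-1}(t)\cap K'|\le \max_t|h^{-1}(t)\cap K|$. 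If $K$ was itself in a position realizing $trunk(K)$ this already finishes it; in general one applies the construction to a position of $K$ realizing $trunk(K)$ (which one may assume also has $h|_K$ Morse with $P$ thinnest, or reduces to such a position without increasing the trunk) to conclude $trunk(K')\le trunk(K)$. The hard part will be making the phrase ``without ever creating a level whose count exceeds $|P\cap K|$'' rigorous: this requires choosing the compressing disk $D$ and the accompanying subdisk of $P$ so that the two sub-arcs of $K$ being slid are \emph{adjacent} in the height order, i.e.\ cut off an ``unknotted, unlinked'' subtangle, which is precisely the normalization carried out in Wu's proof and which I would reproduce here in the knot-complement setting.
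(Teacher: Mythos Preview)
The paper gives no detailed proof of this lemma: it simply asserts that the statement follows ``by an argument same as the proof of \cite[Theorem 1]{W}''. Your proposal is therefore already more detailed than the paper, and your outline---normalize a compressing disk for the thinnest thin sphere \`a la Wu, then perform the resulting width--reducing slide and observe that no level count increases---is the intended approach.

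Your handling of the trunk inequality is correct in substance but muddled in two places. First, the sentence ``without ever creating a level whose count exceeds the count at $P$ before the move'' is not what you want: $|P\cap K|$ is the \emph{smallest} thin count, and many levels already exceed it. What you mean (and what actually holds for the Wu--type slides, which only push maxima down or minima up within a slab bounded by thin spheres) is that \emph{no individual level count increases}, hence $\max_t |h^{-1}(t)\cap K'|\le \max_t |h^{-1}(t)\cap K|$. State it that way and the argument is clean. Second, your final paragraph is based on a misreading of the lemma: here $w(K)$ and $trunk(K)$ are being used as abbreviations for the width and the maximal level count \emph{of the given Morse position}, not as the knot invariants (otherwise $w(K')<w(K)$ would be impossible, since $K'\simeq K$). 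This is exactly how the paper uses the lemma in the proofs of Theorems~\ref{m-small} and~\ref{m-small2}, where one starts from a position with $\max_t|h^{-1}(t)\cap K|\le trunk(K)$ and inducts on the position's width. Once you read the statement this way, your intermediate inequality $\max_t|h^{-1}(t)\cap K'|\le \max_t|h^{-1}(t)\cap K|$ \emph{is} the conclusion, and the digression about passing to a trunk--realizing position should be deleted.
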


\begin{proof} (of Theorem \ref{m-small})
Let $K$ be a knot in $S^3$ such that $h|_K$ is a Morse function and $|h^{-1}(t)\cap K|\le trunk(K)$ for all $t\in\Bbb{R}$.
If there exists no thin level 2-sphere for $K$, then $K$ is in a bridge position and we have $trunk(K)\ge 2bridge(K)$.
Otherwise, let $P$ be a thinnest level 2-sphere for $K$.
If $|P\cap K|=2$, then $P$ decomposes $K$ into two knots $K_1$ and $K_2$.
However since $K$ is meridionally small, at least one of $K_1$ and $K_2$ is trivial.
Therefore, hereafter we assume that $|P\cap K|\ge 4$.
If $P-K$ is incompressible in $S^3-K$, then this contradicts that $K$ is meridionally small.
Otherwise, by Lemma \ref{isotopy}, we can isotope $K$ so that the width of $K$ decreases without increasing the trunk of $K$.
In this case, we can proceed by an induction on $w(K)$.
\end{proof}

\begin{proof} (of Theorem \ref{m-small2})
We have by Theorem \ref{m-small} that $trunk(K_i)=2bridge(K_i)$ $(i=1,2)$ and we may assume that $trunk(K_i)\ge 4$ for $i=1,2$.
Put $K_1\# K_2$ so that $h|_{K_1\# K_2}$ is a Morse function and $|h^{-1}(t)\cap (K_1\# K_2)|\le trunk(K_1\# K_2)$ for all $t\in\Bbb{R}$.
Moreover we assume that the width of $K_1\# K_2$ is minimal under this supposition.

If there exists no thin level 2-sphere for $K_1\# K_2$, then $K_1\# K_2$ is in a bridge position and we have $trunk(K_1\# K_2)= 2bridge(K_1\# K_2)$.
By the additivity of the bridge number (\cite{S}) and Theorem \ref{m-small}, $2bridge(K_1\# K_2)=2bridge(K_1)+2bridge(K_2)-2=trunk(K_1)+trunk(K_2)-2$.
Hence $trunk(K_1\# K_2)> \max\{trunk(K_1),trunk(K_2)\}$, a contradiction.

Otherwise, let $P$ be a thinnest level 2-sphere for $K_1\# K_2$.
If $|P\cap (K_1\# K_2)|=2$, then $P$ is a decomposing sphere for $K_1\# K_2$.
In this case, we have $trunk(K_1\# K_2)=\max \{2bridge(K_1),2bridge(K_2)\}=\max\{trunk(K_1), trunk(K_2)\}$.
If $|P\cap (K_1\# K_2)|\ge 4$, then the miniality of the width of $K_1\# K_2$ and Lemma \ref{isotopy}, $P-(K_1\# K_2)$ is incompressible in $S^3-(K_1\# K_2)$.
However this implies that there exists an essential surface with meridional boundary in one of the exterior $E(K_i)$ for $i=1,2$, and contradicts that both of $K_1$ and $K_2$ are meridionally small.
\end{proof}




Let $h:S^3\to \Bbb{R}$ be the standard Morse function and $F$ be a closed surface in $S^3$ which is assumed so that the restriction of $h$ to $F$ is a Morse function.
Suppose that $F$ has a saddle point $p$ which corresponds the critical value $t_p\in \Bbb{R}$.
Let $X_p$ be a pair of pants component of $F\cap h^{-1}([t_p-\epsilon, t_p+\epsilon])$ containing $p$ for a fixed sufficiently small positive real number $\epsilon$.
Let $C_1$, $C_2$ and $C_3$ be the boundary components of $X_p$, where we assume that $C_1$ and $C_2$ are contained in the same level $h^{-1}(t_p\pm\epsilon)$, and $C_3$ is contained in the another level $h^{-1}(t_p\mp\epsilon)$.
See Figure \ref{saddle}.
We call a saddle point $p$
\begin{enumerate}
\item {\em Type I} if all of $C_1$, $C_2$ and $C_3$ are inessential in $F$,
\item {\em Type II} if exactly one of $C_1$ and $C_2$ is essential and $C_3$ is essential in $F$,
\item {\em Type III} if both of $C_1$ and $C_2$ are essential and $C_3$ is inessential in $F$,
\item {\em Type IV} if all of $C_1$, $C_2$ and $C_3$ are essential in $F$,
\end{enumerate}

\begin{figure}[htbp]
	\begin{center}
	\includegraphics[trim=0mm 0mm 0mm 0mm, width=.5\linewidth]{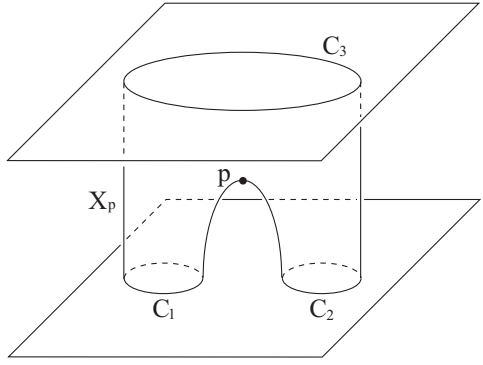}
	\end{center}
	\caption{a pair of pants component $X_p$ and three boundary components $C_1$, $C_2$ and $C_3$}
	\label{saddle}
\end{figure}

\begin{lemma}\label{eliminate}
Let $F$ be a closed surface in $S^3$ such that $h|_F$ is a Morse function.
Then we can remove all saddle points of Type I and II without alterations of the number of saddle points of Type III and IV.
\end{lemma}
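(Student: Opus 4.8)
The plan is to remove the bad saddles one at a time, by induction on $N=\#\{\text{Type I saddles}\}+\#\{\text{Type II saddles}\}$, using a single cancellation move. Fix a saddle $p$ and $\epsilon>0$ small enough that $F\cap h^{-1}([t_p-\epsilon,t_p+\epsilon])$ consists of the pair of pants $X_p$ together with vertical annuli, and suppose one of the three boundary circles of $X_p$, say $C$, bounds a disk $D$ in $F$ with $D\cap X_p=C$ which is moreover a \emph{cap}, i.e.\ $h|_D$ has a single critical point, an extremum $e$, and no saddle. Then $A:=D\cup_{C}X_p$ is an annulus embedded in $F$ whose two boundary circles are level circles and which carries exactly the two critical points $p$ and $e$; these form a cancelling pair (the finger $D$ is trivial). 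I would cancel them by the obvious isotopy that straightens $A$ rel $\partial A$ into a monotone annulus, chosen to be supported in a thin neighbourhood of $A$. The move deletes $p$ and one extremum, introduces no new critical point, and changes $F$ only inside that neighbourhood; since the neighbourhood meets $F$ only in $A$, every other saddle is merely translated, so its type — and in particular the number of Type III and Type IV saddles — is unaffected.

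Suppose $F$ still has a Type I saddle. All three boundary circles of its pair of pants $X_p$ bound disks in $F$, and a short argument (if the disk bounded by one circle contains $X_p$, pass to the subdisk bounded by one of the other two) shows that some such circle $C$ bounds a disk $D$ with $D\cap X_p=C$. Over all triples consisting of a Type I saddle, such a circle, and such a disk $D$, choose one for which $D$ has the fewest critical points of $h|_F$. Then $D$ has no interior saddle: such a saddle would itself be of Type I — its boundary circles bound subdisks of $D\subseteq F$, hence are inessential in $F$ — and would yield a triple with a strictly smaller finger disk, contrary to the minimality of the choice. A disk in $F$ bounded by a level circle with no interior saddle lies to one side of that level and carries exactly one extremum, so $D$ is a cap, and the move cancels this Type I saddle; thus $N$ drops by $1$.

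When no Type I saddle remains, take a Type II saddle $p$ with $C_1$ essential, $C_2$ inessential, $C_3$ essential, and a disk $D_2\subseteq F$ with $\partial D_2=C_2$. Were $D_2$ to contain $X_p$ or to meet it in an annulus, one of $C_1,C_3$ would bound a subdisk of $D_2$ and hence be inessential; so $D_2\cap X_p=C_2$. As there is no Type I saddle, $D_2$ has no interior saddle, so it is a cap, and applying the move to $p$ and $C_2$ again lowers $N$ by $1$. Iterating, $N$ reaches $0$: all Type I and Type II saddles are removed, and throughout the process the numbers of Type III and Type IV saddles are never changed.

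The one genuinely delicate point, invoked at every application of the move, is that the straightening isotopy can be chosen with support disjoint from $F$ outside the annulus $A$ — equivalently, that a sufficiently thin neighbourhood of an embedded subannulus $A\subseteq F$ meets $F$ only in $A$. This rests on $A$ being properly embedded and on the fact that along $\partial A$ the remaining sheets of $F$ leave $A$ to one side, so the neighbourhood can be bent away from them; it is exactly this that keeps the Type III and IV saddles from being disturbed. The subsidiary bookkeeping — how the disk bounded by each $C_i$ lies relative to the pair of pants, and the use of the essentiality of $C_1$ and $C_3$ in the Type II case to exclude annular intersections — is routine, relying only on the fact that two disks in $F$ bounded by disjoint level circles are nested or disjoint.
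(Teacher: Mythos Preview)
Your argument follows essentially the same route as the paper's: locate an inessential boundary circle of the pair of pants that bounds a disk in $F$ carrying a single extremum, then cancel that extremum against the saddle by a vertical isotopy.  The paper simply asserts that the disk may be taken to contain no saddle and cites \cite{MO2} for the cancellation move; you supply the minimality argument that justifies this, and you are more explicit about why the isotopy leaves the remaining saddles undisturbed.

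There is one small slip.  Your cancellation move, as stated, breaks when the cap is attached along $C_3$.  In that case $\partial A=C_1\cup C_2$ lies in a single level sphere, and an embedded annulus between two circles at the same height cannot be made monotone rel boundary; equivalently, \emph{both} descending (or ascending) gradient arcs from $p$ run to the extremum $e$, so $(p,e)$ is not a cancelling pair in the Morse-theoretic sense.  Your minimization over \emph{all} triples may well land on such a $C_3$.  The fix is immediate: restrict the minimization to disks attached along $C_1$ or $C_2$.  You already observed that at least one of $C_1,C_2$ bounds a disk meeting $X_p$ only in its boundary, and your inner-saddle argument goes through verbatim in this restricted class, since any interior saddle is again of Type~I and again has such a disk along one of its two same-level circles.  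The paper avoids the issue by taking $C_1$ from the outset.
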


\begin{proof}
By an induction on the number of saddle points of Type I and II, we remove all saddle points of Type I and II.
Suppose without loss of generality that $C_1$ is inessential in $F$ and bounds a disk $D$ in $F$.
Furthermore we may assume that $D$ contains no saddle point of $F$ and hence only one maximal or minimal point.
As in the Proof of Lemma 5 in \cite{MO2}, we can eliminate the maximal/minimal point of $D$ and the saddle point $p$ by a ``vertical'' isotopy of $D$.
We remark that there is no birth nor death of other critical points for this elimination of two critical points.
Repeating this process, all saddle points of Type I and II can be removed.
\end{proof}

\begin{lemma}\label{saddle}
Let $F$ be a closed surface in $S^3$ such that $h|_F$ is a Morse function.
Then 
\begin{enumerate}
\item $F$ is a 2-sphere if and only if any saddle point of $F$ is of Type I.
\item $F$ is a torus if and only if any saddle point of $F$ is Type I, II or III and there exists a saddle point of Type III.
\item $F$ is a closed surface of genus greater than one if and only if any saddle point of $F$ is Type I, II, III or IV and there exists a saddle point of Type IV.
\end{enumerate}
\end{lemma}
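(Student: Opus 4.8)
The plan is to reason entirely with the restricted Morse function $h|_F$; recall that a closed surface in $S^3$ is orientable and (implicitly) connected, so $F$ is determined by its genus. As a preliminary I would show that \emph{every} saddle is of one of the four listed types, which makes the clause ``any saddle point of $F$ is Type$\ldots$'' automatic in each item: if a boundary circle $C_i$ of the pants $X_p$ bounds a disk $\delta$ in $F$, then either $X_p\subseteq\delta$, so all three circles are inessential (Type I), or $X_p$ meets $\delta$ only in $C_i$ and $\delta\cup X_p$ is an annulus, so the remaining two boundary circles of $X_p$ are isotopic in $F$ and hence both essential or both inessential; checking $i=3$ and $i\in\{1,2\}$ gives Types I or III, resp.\ Types I or II, and if no $C_i$ bounds a disk we are in Type IV. By Lemma~\ref{eliminate} we may delete all Type I and II saddles, leaving $F$, its genus, and the counts of Type III and IV saddles unchanged, so the whole statement reduces to Morse functions whose saddles are all of Type III or IV. Item (1) follows: on $S^2$ every curve is inessential, so all saddles are Type I; conversely, if all saddles are Type I we delete them all and are left with a Morse function with no saddles, whose surface is a union of spheres, hence $S^2$ by connectedness.

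The next ingredient controls Type IV: \emph{a pair of pants embeds in a closed orientable surface $F$ with all three boundary circles essential only if $g(F)\ge 2$}. Indeed $G:=F\setminus\operatorname{int}X_p$ has $\chi(G)=\chi(F)-\chi(X_p)=3-2g(F)$ and boundary $C_1\sqcup C_2\sqcup C_3$; no component of $G$ is a disk (its boundary circle would be inessential) nor a closed surface (each component meets $X_p$), so $\chi(G)\le 0$ and $g(F)\ge 2$. This gives the ``only if'' half of (2) (no Type IV saddle on $T^2$) and the ``if'' half of (3) (a Type IV saddle forces $g(F)\ge 2$).

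It remains to prove: \emph{if $h|_F$ has no Type IV saddle then $g(F)\le 1$}; together with (1) this also finishes (2), since if in addition a Type III saddle exists then $F\ne S^2$, forcing $g(F)=1$. So assume every saddle is Type III. I would track the sublevel set $F_{<r}$ as $r$ increases. Watching its genus: minima, maxima, Type III splits (a level circle becoming two), and ``join'' merges (two circles on \emph{different} components becoming one) leave the genus unchanged, while a ``handle'' merge (two circles on the \emph{same} component becoming one) raises it by $1$; hence $g(F)$ equals the number of handle saddles. Watching its number of components: minima raise it by $1$, join merges lower it by $1$, everything else is neutral, and it runs from $0$ to $1$; combining this with $\#\text{splits}=\#\text{handles}+\#\text{joins}$ (from the count of essential level circles, which changes by $\pm 2$ only at Type III saddles) and $\chi(F)=\#\text{minima}-\#\text{saddles}+\#\text{maxima}=2-2g(F)$ yields $\#\text{maxima}=\#\text{joins}+1$.

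Finally, at any merge saddle the new circle $C_3$ is inessential, so it bounds a disk $\Delta\subseteq F$ (unique since $F\ne S^2$), and $\Delta$ lies on the side of $C_3$ \emph{away} from the part of $F_{<r}$ carrying the merged components, which is the upward side. Since there are no Type I saddles (a saddle whose pants lies in a disk region is Type I), the interior of $\Delta$ contains no saddle, and since $F$ is connected it contains no ``bubble'', hence no local minimum either; so $\Delta$ is the cap of a single local maximum lying directly above $C_3$. For distinct merge saddles these cap disks are disjoint: a nesting $\Delta\subseteq\Delta'$ would make $\Delta'\setminus\operatorname{int}\Delta$ a critical-point-free annulus between $C_3$ and $C_3'$, yet it must contain the saddle lying just below the inner circle. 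Therefore $\#\text{handles}+\#\text{joins}\le\#\text{maxima}=\#\text{joins}+1$, so there is at most one handle saddle and $g(F)\le 1$. Assembling these pieces proves the three equivalences. The main obstacle I anticipate is precisely this last paragraph: making the ``cap disk'' claim and its disjointness rigorous requires care with the position of $h|_\Delta$ (existence of exactly one interior maximum, absence of bubbles and of minima) rather than with isotopy classes alone; the rest is bookkeeping with Euler characteristics and the elimination of Type I and II saddles.
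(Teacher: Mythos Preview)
Your argument is correct; in particular the cap-disk step you flag as delicate does go through, since $\Delta$ is a disk containing no saddles and hence the Reeb graph of $h|_\Delta$ is a single edge from $C_3$ to a unique interior critical point, which must be a maximum because $h$ increases into $\Delta$ near $C_3$.

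Your route differs from the paper's in both nontrivial directions. For ``Type~IV forces $g(F)\ge 2$'' the paper argues directly on $T^2$: disjoint essential simple closed curves on a torus are mutually parallel, and if two of $C_1,C_2,C_3$ are parallel then the third bounds (with the pair of pants and that annulus) a one-holed torus or disk, hence is separating and so inessential on $T^2$. Your Euler-characteristic computation on $F\setminus X_p$ is an equally short alternative that avoids any special knowledge of curves on the torus. For the converse ``only Type~III saddles forces $g(F)\le 1$'', the paper is extremely brief: after eliminating Types~I and~II, each Type~III saddle has $C_1\parallel C_2$ in $F$ (via the disk bounded by $C_3$), so ``$F$ can be decomposed into annuli and it is a torus''. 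Your Morse-theoretic bookkeeping (splitting Type~III saddles into splits/joins/handles, counting essential level circles and components, and then injecting merges into maxima via disjoint cap disks) is a genuinely different mechanism that yields the bound $g(F)=\#\text{handles}\le 1$ by a direct inequality. The trade-off is that the paper's argument is a single sentence but leaves the reader to reconstruct why the annulus decomposition really closes up to a torus, whereas your argument is longer but makes every step explicit and does not rely on any implicit picture of how the pieces fit together.
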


\begin{proof}
First by Lemma \ref{eliminate}, we remove all saddle points of Type I and II.

(1) If $F$ is a 2-sphere, then any loop contained in $F$ is inessential.
Therefore any saddle point of $F$ is of Type I.
Conversely if any saddle point is of Type I, then there exists no saddle point after we eliminated all saddle points of Type I.
Thus $F$ is a 2-sphere.

(2) If $F$ is a torus, then any essential loops in $F$ are mutually parallel and non-separating in $F$.
If two of three loops appearing in a saddle point of Type IV are mutually parallel in $F$, then the other one is separating in $F$.
Hence $F$ contains no saddle point of Type IV when $F$ is a torus.
Moreover after we eliminated all saddle points of Type I and II, any saddle point of $F$ is of Type III and there exists a saddle points of Type III.

(3) If $F$ is a closed surface of genus greater than one, then any saddle point of $F$ is of Type III or IV after we eliminated all saddle points of Type I and II.
If any saddle point of $F$ is of Type III, then $F$ can be decomposed into annuli and it is a torus, a contradiction.
Hence there exists a saddle point of Type IV.
\end{proof}

\begin{proof} (of Theorem \ref{main})
In a case that $K$ is a trivial knot, $waist(K)=0$ and $trunk(K)=2$, so we have an inequality of Theorem \ref{main}.

Let $K$ be a non-trivial knot, $F$ be a closed incompressible surface in $S^3-K$ and $h:S^3\to \Bbb{R}$ be a Morse function with two critical points which is also a Morse function on $K$ and $F$.
Suppose that $h$ satisfies $trunk(K)=max_{t\in\Bbb{R}}|h^{-1}(t)\cap K|$.
To prove Theorem \ref{main}, we need to consider two cases.

\begin{description}
\item [Case I] There exists a saddle point of Type IV in $F$.
\item [Case II] There exists no saddle point of Type IV in $F$.
\end{description}

Case I: If there exists a saddle point $p$ of Type IV, then each essential loop $C_i$ bounds a disk $D_i$ in $h^{-1}(t_p\pm\epsilon)$ so that $D_1\cup D_2\cup D_3\cup X_p$ forms a 2-sphere which is isotopic to a level 2-sphere $h^{-1}(t_p)$, where $t_p$ is the critical value corresponding to $p$ and $X_p$ is a pair of pants component of $F\cap h^{-1}([t_p-\epsilon, t_p+\epsilon])$ containing $p$.
See Figure \ref{2-sphere}.
We remark that there exists no critical point of $K$ in $h^{-1}([t_p-\epsilon, t_p+\epsilon])$ since we chose a fixed sufficiently small positive real number $\epsilon$.
Then we have
\[
|D_1\cap K|+|D_2\cap K|+|D_3\cap K|=|h^{-1}(t_p)\cap K|.
\]
Since $|h^{-1}(t_p)\cap K|\le trunk(K)$ for all $t_p\in \Bbb{R}$,
\[
\min_i|D_i\cap K|\le \frac{trunk(K)}{3}.
\]

\begin{figure}[htbp]
	\begin{center}
	\includegraphics[trim=0mm 0mm 0mm 0mm, width=.5\linewidth]{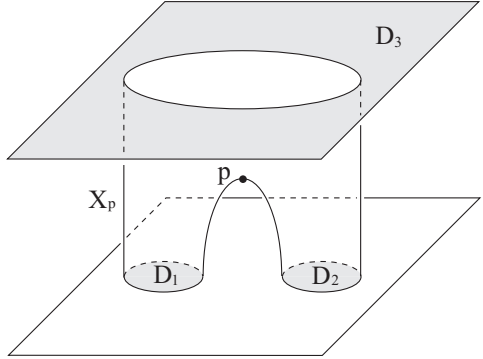}
	\end{center}
	\caption{$D_1\cup D_2\cup D_3\cup X_p$ forms a 2-sphere}
	\label{2-sphere}
\end{figure}

In the following, we show that $waist(K)\le |D_i\cap K|$.
Let $C_i'$ be a loop of $D_i\cap F$ which is essential in $F$ and innermost in $D_i$, and $D_i'$ be the innermost disk in $D_i$ which is bounded by $C_i'$.
Since $D_i\cap F$ contains at least one loop which is essential in $F$ (e.g. $\partial D_i=C_i$), we can take such a loop $C_i'$ and a disk $D_i'$.
Then by an elementary cut and paste argument, we may assume that $D_i'\cap F=\partial D_i'=C_i'$.
Since $D_i'$ is also a compressing disk for $F$ in $S^3$, we have
\[
waist(K)\le |D_i'\cap K|\le |D_i\cap K|.
\]

Case II: We note that $F$ is not a 2-sphere since $K$ is a knot.
If there exists no saddle point of Type IV, then by Lemma \ref{saddle}, $F$ is a torus.
If there exists a level 2-sphere $S=h^{-1}(t)$ for some $t\in\Bbb{R}$ such that $S$ contains at least three non-parallel classes of loops of $F\cap S$ which are essential in $F$, then we obtain an inequality of Theorem \ref{main} by the previous argument in Case I.
So we assume that
\begin{description}
\item [Condition A] for each level 2-sphere $S$, every essential loops in $F$ are mutually parallel in $S$.
\end{description}
We will show that if Condition A holds, then
\begin{description}
\item [Condition B] $F$ can be isotoped so that it is a ``1-bridge torus'',
\end{description}
where ``1-bridge'' means that it has only one maximal and one minimal point, and two saddle point of Type III.
Note that we do not need to consider $K$ in the next claim.

\begin{claim}\label{1-bridge}
If Condition A satisfies, then Condition B holds.
\end{claim}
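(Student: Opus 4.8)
The plan is to argue entirely within the surface $F$ — as the paper notes, the knot $K$ plays no role — reducing the critical-point data of $F$ by isotopies that never create Type I or II saddles, and inducting downward to the 1-bridge configuration.

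First I would apply Lemma~\ref{eliminate} to isotope $F$ so that all of its saddles are of Type III; this is legitimate since a torus has no Type IV saddle, by Lemma~\ref{saddle}(2). Reading a Type III saddle upward, call it an \emph{$\alpha$-saddle} if it splits an inessential level loop into two parallel essential loops, and an \emph{$\omega$-saddle} if it merges two essential loops into an inessential one. The number $e(t)$ of loops of $F\cap h^{-1}(t)$ essential in $F$ is zero near $\pm\infty$ and changes by $\pm2$ only at saddles, so $F$ has the same number of $\alpha$- and $\omega$-saddles, say $q$ of each. Following the boundary circle of any minimal (respectively maximal) cap disk upward (respectively downward) along $F$, at its first saddle it must play the role of the loop $C_3$ of an $\alpha$-saddle (respectively $\omega$-saddle), for otherwise that saddle would be of Type I or II; distinct caps reach distinct such saddles, so $F$ has at most $q$ minima and at most $q$ maxima. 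Since the number of minima plus the number of maxima equals the number of saddles, which is $2q$, because $\chi(F)=0$, the surface $F$ has exactly $q$ minima and exactly $q$ maxima.

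Now I would induct on $q$ (equivalently on the number of critical points of $h|_F$). If $q=1$, then $F$ has a single minimum, a single maximum, one $\alpha$-saddle and one $\omega$-saddle; since $e\ge0$ forces the $\alpha$-saddle to lie below the $\omega$-saddle, $F$ meets every level between them in two parallel essential loops and every level outside that range in a single inessential loop, which is exactly a 1-bridge torus — Condition~B. Suppose $q\ge2$. A routine inspection of the level loops shows that if no level 2-sphere met $F$ in three or more essential loops, then $F$ would be a disjoint union of $q$ tori, contradicting connectedness; so some level 2-sphere $S=h^{-1}(t)$ meets $F$ in at least three — hence, by parity, at least four — loops that are essential in $F$. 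By Condition~A these loops are not all pairwise non-parallel in $S$, so two of them, $\ell_1$ and $\ell_2$, cobound an annulus $A\subset S$ whose interior is disjoint from $F$.

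The reduction then uses $A$: $\ell_1$ and $\ell_2$ are disjoint essential curves on the torus $F$, so they cobound a sub-annulus $B\subset F$, $A\cup B$ is an embedded torus bounding a solid torus in $S^3$, and guiding $B$ along $A$ by an ambient isotopy supported near $A\cup B$ cancels a pair of saddles (one $\alpha$, one $\omega$), lowering $q$; the move only deletes loops of intersection with level spheres, so it preserves Condition~A, and after re-applying Lemma~\ref{eliminate} the induction proceeds. I expect this last step to be the main obstacle: one must choose $\ell_1,\ell_2$ and the annulus $B$ with care — taking innermost loops in $A$, and keeping track of which side of $A\cup B$ is a solid torus and on which side of it $F$ lies — and then verify that guiding $B$ through $A$ genuinely removes two saddle points (equivalently, strictly lowers the width of $F$) rather than merely relocating critical points, and that it introduces no Type I or II saddle before Lemma~\ref{eliminate} is invoked again.
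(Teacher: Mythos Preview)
Your preliminary analysis is sound and matches the paper: after Lemma~\ref{eliminate} only Type~III saddles remain, each extremum is capped to the $C_3$ of a unique Type~III saddle, and the Euler-characteristic count gives exactly $q$ maxima, $q$ minima, $q$ $\alpha$-saddles and $q$ $\omega$-saddles. Your connectedness argument that $q\ge 2$ forces some level sphere to meet $F$ in at least four essential loops is also correct.

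Where you diverge from the paper is the reduction step, and the gap you flag is real. The paper does \emph{not} attempt a swap across a level annulus. Instead it takes the thin/thick decomposition $(S^3,F)=(M_1,F_1)\cup_{S_1}\cdots\cup_{S_{m-1}}(M_m,F_m)$ and observes that, after Lemma~\ref{eliminate}, each component of every $F_i$ is either a vertical annulus or a boundary-parallel annulus carrying exactly one Type~III saddle and one extremum. It then works at the \emph{highest} thick sphere $S_{m-1}$: all of $F_m$ consists of annuli parallel to $S_{m-1}$, and Condition~A forces the boundary-parallel annuli of $F_{m-1}$ to be parallel to $S_{m-1}$ as well, so they are nested and one may pick an outermost one $A$. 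Pushing $A$ across the product region into $M_m$ either glues it to two distinct annuli $A_1,A_2$ of $F_m$ --- in which case $A\cup A_1\cup A_2$ is isotoped to a single boundary-parallel annulus with one saddle and one maximum, dropping $q$ by one --- or glues it to a single annulus $A'$ of $F_m$, and then $F=A\cup A'$ is already the $1$-bridge torus.

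Your annulus-swap, by contrast, is not guaranteed to lower $q$. Even granting that $\mathrm{int}(A)\cap F=\emptyset$ and that the solid torus bounded by $A\cup B$ lies on the correct side, replacing $B$ by any Morse perturbation $A'$ of the horizontal annulus $A$ still introduces at least one saddle and one extremum (since $\chi(A)=0$ and $\partial A$ sits at a single level). So you must show $B$ carried strictly more than that --- but $B$ can perfectly well be a ``$1$-bridge'' annulus with exactly one $\omega$-saddle and one maximum, in which case the swap achieves nothing. One could try to pass to the complementary annulus $B'$ instead, but then the solid-torus and disjointness hypotheses need to be re-established, and the argument does not close. The paper's top-down push through an explicit product region avoids all of this bookkeeping: the outermost annulus $A$ is parallel into $S_{m-1}$, so the isotopy and the critical-point count are both transparent.
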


Before we prove Claim \ref{1-bridge}, we need some definitions.
Let $F$ be a closed surface in $S^3$ such that $h|_F$ is a Morse function.
Let $t_1,\ldots,t_n$ be critical values of $F$ such that $t_i<t_{i+1}$, and choose regular values $r_1,\ldots,r_{n+1}$ of $F$ so that $r_i<t_i<r_{i+1}$.
We say that a level 2-sphere $S=h^{-1}(r_i)$ is {\em thick} if $|h^{-1}(r_{i-1})\cap F|<|h^{-1}(r_{i})\cap F|$ and $|h^{-1}(r_{i+1})\cap F|<|h^{-1}(r_{i})\cap F|$, and that a level 2-sphere $S=h^{-1}(r_i)$ is {\em thin} if $|h^{-1}(r_{i-1})\cap F|>|h^{-1}(r_{i})\cap F|$ and $|h^{-1}(r_{i+1})\cap F|>|h^{-1}(r_{i})\cap F|$.
Then the pair $(S^3,F)$ can be decomposed by thick level 2-spheres and thin level 2-spheres, and put $(S^3,F)=(M_1,F_1)\cup_{S_1}(M_2,F_2)\cup_{S_2}\cdots\cup_{S_{m-1}}(M_m,F_m)$, where $S_i$ is a thick (resp. thin) level 2-sphere if $i$ is odd (resp. even).

For example, the genus two closed incompressible surface $F$ in the left side of Figure \ref{ex1} has two thick level 2-spheres and one thin level 2-spheres, and $(S^3,F)$ is decomposed into four regions.
On the other hand, the genus five closed incompressible surface $F$ in the right side of Figure \ref{ex1} has four thick level 2-spheres and three thin level 2-spheres, and $(S^3,F)$ is decomposed into eight regions.

\begin{proof} (of Claim \ref{1-bridge})
First by Lemma \ref{eliminate}, we remove all saddle points of Type I and II.
Then any critical point of $F$ is of Type III or maximal or minimal.
Thus an inessential loop $C_3$ appearing in near a saddle point of Type III bounds a disk in $F$ which contains only one maximal or minimal point.
Hence each component of the thin/thick decomposition of $F$ is an annulus containing one saddle point of Type III and one maximal or minimal point, or an annulus containing no critical point.
Therefore, in each region $M_i$ of the thin/thick decomposition of $S^3$, each component of $F_i$ is a boundary parallel annulus or a ``vertical'' annulus.
Here we remark that at least one component of $F_i$ is a boundary parallel annulus.

We turn our attention to the highest thick level 2-sphere $S_{m-1}$ and two regions $M_{m-1}$ and $M_m$.
We remark that all components of $F_m$ are boundary parallel to $S_{m-1}$ in $M_m$, all annulus components of $F_{m-1}$ containing no critical point are vertical annuli in $M_{m-1}$, and all annulus components of $F_{m-1}$ are boundary parallel to $S_{m-1}$ in $M_{m-1}$ by Condition A.
Let $A$ be an outermost annulus in $M_{m-1}$ which is boundary parallel to $S_{m-1}$.
We push in $A$ into $M_m$, then we have two cases.

\begin{description}
\item[Case 1] $A$ connects two annulus components $A_1$ and $A_2$ of $F_m$.
\item[Case 2] $A$ connects one annulus component $A'$ of $F_m$.
\end{description}

Case 1: By an isotopy, we may assume that an annulus $A\cup A_1\cup A_2$ contains one saddle point of Type III and one maximal point.
By an induction on the number of critical points of $F$, we will arrive in Case 2.

Case 2: $A\cup A'$ forms the whole surface $F$ which is in a 1-bridge position, and we have Condition B.
\end{proof}

However, Claim \ref{1-bridge} shows that $F$ is compressible in $S^3-K$ since $F$ cannot be in both sides of $F$.
\end{proof}

\begin{remark}
By the proof of Theorem \ref{main}, for any knot $K$ and any closed incompressible surface $F$ in $S^3-K$ which are in Morse position with respect to the height function $h:S^3\to \Bbb{R}$, there exists a level 2-sphere $P=h^{-1}(t)$ for some $t\in\Bbb{R}$ such that $F\cap P$ contains at least three loops which are mutually non-parallel on $P$ and essential in $F$.
\end{remark}

\section{Further way}
In this paper, we defined the waist and the trunk of knots and obtained an inequality between them.
Both numerical invariants are considered as ``horizontal'' quantity.

To define a ``vertical'' quantity, we decompose the triple $(S^3,F,K)$ by thin and thick level 2-spheres for $F$ as in Section 2.
Then it seems to be reasonable that we define the {\em height} of $F$, $height(F)$, as the maximal number of thick level 2-spheres (or the maximal number of thin level 2-spheres $+1$) under the condition that the number of maximal and minimal points is minimized.
The given condition in this definition implies that there exists no saddle point of Type I nor II.
Then we can define the {\em height} of a non-trivial knot $K$ as
\[
height(K)=\max_{\mathcal{F}} \{height(\mathcal{F})\},
\]
where $\mathcal{F}$ is a family of mutually disjoint closed incompressible surfaces in $S^3-K$ whose number is maximal.
The author believe that the height of a knot (as same as the ``maximal'' thin/thick decomposition) will give more close nature than thin position.

\bigskip


\bibliographystyle{amsplain}

\begin{thebibliography}{10}

\bibitem{A2} C. Adams, J. Brock, J. Bugbee, T. Comar, K. Faigin, A. Huston, A. Joseph and D. Pesikoff, {\em Almost alternating links}, Topology Appl. {\bf 46} (1992) 151--165.

\bibitem{A} C. Adams, {\em Toroidally alternating knots and links}, Topology {\bf 33} (1994) 353--369.


\bibitem{CKKS} J. Cantarella, G. Kuperberg, R. B. Kusner, and J. M. Sullivan, {\em The second hull of a knotted curve}, Amer. J. Math. {\bf 125} (2003), 1335--1348.

\bibitem{CGLS} M. Culler, C. McA. Gordon, J. Luecke and P. B. Shalen, {\em Dehn surgery on knots}, Ann. of Math. {\bf 125} (1987) 237--300.


\bibitem{G} D. Gabai, {\em Foliations and the topology of 3-manifolds. III}, J. Differential Geometry {\bf 26} (1987) 479--536.

\bibitem{HRT} J. Hass, J. H. Rubinstein and A. Thompson, {\em Knots and $k$-width}, Geometriae Dedicata (2009) Online First.

\bibitem{HT} A. Hatcher and W. Thurston, {\em Incompressible surfaces in $2$-bridge knot complements}, Invent. Math. {\bf 79} (1985), 225-246.

\bibitem{J} W. Jaco, \textit{Lectures on Three Manifold Topology}, AMS Conference board of Math. No. 43, 1980.


\bibitem{K} N. H. Kuiper, {\em A new knot invariant}, Math. Ann. {\bf 278} (1987), 193--209. 

\bibitem{LP} M. T. Lozano and J. H. Przytycki, {\em Incompressible surfaces in the exterior of a closed 3-braid I, surfaces with horizontal boundary components}, Math. Proc. Camb. Phil. Soc. {\bf 98} (1985) 275--299.

\bibitem{M} W. Menasco, {\em Closed incompressible surfaces in alternating knot and link complements}, Topology {\bf 23} (1984) 37--44.


\bibitem{Mor} K. Morimoto, {\em Essential surfaces in the exteriors of torus knots with twists on 2-strands}, preprint.

\bibitem{O} U. Oertel, {\em Closed incompressible surfaces in complements of star links}, Pacific J. Math. {\bf 111} (1984) 209--230.

\bibitem{MO9} M. Ozawa, {\em Synchronism of an incompressible non-free Seifert surface for a knot and an algebraically split closed surface in the knot complement}, Proc. Amer. Math. Soc. {\bf 128} (2000), no. 3, 919--922. 


\bibitem{MO8} M. Ozawa, {\em Non-triviality of generalized alternating knots}, J. Knot Theory and its Ramifications {\bf 15} (2006) 351--360.

\bibitem{MO} M. Ozawa, {\em Morse position of knots and closed incompressible surfaces}, J. Knot Theory and its Ramifications {\bf 17} (2008) 377--397.

\bibitem{MO2} M. Ozawa, {\em Closed incompressible surfaces of genus two in 3-bridge knot complements}, Topology and its Appl. {\bf156} (2009) 1130--1139.

\bibitem{MO3} M. Ozawa, {\em Rational structure on algebraic tangles and closed incompressible surfaces in the complements of algebraically alternating knots and links}, arXiv:0803.1302.

\bibitem{S} H. Schubert, {\em \"{U}ber eine numerische Knoteninvariante}, Math. Z. {\bf 61} (1954) 245--288.

\bibitem{S2}J. Schultens, {\em Additivity of bridge numbers of knots}, Math. Proc. Camb. Phil. Soc. {\bf 135} (2003) 539--544.

\bibitem{S3}J. Schultens, {\em Bridge numbers of torus knots}, Math. Proc. Camb. Phil. Soc. {\bf 143} (2007) 621--625.

\bibitem{T} A. Thompson, {\em Thin position and bridge number for knots in the 3-sphere}, Topology {\bf 36} (1997) 505--507.

\bibitem{W} Y.-Q. Wu, {\em Thin position and essential planar surfaces}, Proc. Amer. Math. Soc. {\bf 132} (2004) 3417--3421.





\end{thebibliography}

\end{document}